\documentclass{amsart}
\usepackage{amssymb}

\usepackage{tikz}
\usetikzlibrary{cd}
\usetikzlibrary{decorations.markings}

\usepackage{amsmath}
\usepackage{mathscinet}
\usepackage{url}

% monoids
\newcommand{\bk}{\Bbbk}
\newcommand{\Z}{\mathbb{Z}}
\newcommand{\R}{\mathbb{R}}
\newcommand{\C}{\mathbb{C}}

% basic notions
\newcommand{\simto}{\overset{\sim}{\to}}
\DeclareMathOperator{\Hom}{Hom}
\DeclareMathOperator{\End}{End}
\DeclareMathOperator{\Sym}{Sym}
\newcommand{\id}{\mathrm{id}}
\newcommand{\For}{\mathsf{For}}
\DeclareMathOperator{\Ch}{ch}

% module categories
\newcommand{\ngmod}{\mathsf{mod}}
\newcommand{\gmod}{\mathsf{gmod}}

\newcommand{\lh}{\text{-}}

% categories of complexes
\newcommand{\Kb}{K^{\mathrm{b}}}
\newcommand{\Db}{D^{\mathrm{b}}}

% groups
\newcommand{\SL}{\mathrm{SL}}
\newcommand{\GL}{\mathrm{GL}}

% Lie algebra
\newcommand{\fg}{\mathfrak{g}}
\newcommand{\cO}{\mathcal{O}}
\newcommand{\gr}{\mathrm{gr}}

% Coxeter and Hecke
\renewcommand{\H}{\mathrm{H}}
\newcommand{\uw}{\underline{w}}

% realizations
\newcommand{\slreal}{\fh^{\SL_2}_\bk}
\newcommand{\fh}{\mathfrak{h}}

% shorthands
\newcommand{\cA}{\mathcal{A}}
\newcommand{\cF}{\mathcal{F}}
\newcommand{\cG}{\mathcal{G}}

% Hecke category and quotients
\newcommand{\cH}{\mathcal{H}}
\newcommand{\ocH}{\overline{\cH}}
\newcommand{\ucH}{\underline{\cH}}
\newcommand{\SBim}{\mathbb{S}\mathrm{Bim}}
\newcommand{\oSBim}{{\overline{\SBim}}}

\newcommand{\inv}{\mathrm{inv}}

% categories of diagram complexes
\DeclareMathOperator{\uHom}{\underline{Hom}}
\DeclareMathOperator{\uEnd}{\underline{End}}
\newcommand{\LM}{\mathsf{LM}}

\newcommand{\la}{\langle}
\newcommand{\ra}{\rangle}
\newcommand{\ustar}{\mathbin{\underline{\star}}}
\newcommand{\lfrown}{\mathbin{\acute{\frown}}}

% free-monodromic complexes
\newcommand{\FM}{\mathsf{FM}}
\newcommand{\TiltFM}{\mathsf{TiltFM}}
\newcommand{\hatstar}{\mathbin{\widehat{\star}}}
\newcommand{\Tmon}{\widetilde{T}}
\newcommand{\kappamon}{\kappa^{\mathrm{mon}}}

% diagrams
%--------------------------------------------------------------------
% s-colored diagrams
%--------------------------------------------------------------------
\newsavebox\BsRbox
\savebox\BsRbox{%
 \begin{tikzpicture}[thick,baseline=-2pt,scale=0.05]
  \draw (0,-5) to (0,0);
  \node at (0,0) {\small $\bullet$};
 \end{tikzpicture}
}
\newcommand{\BsR}{\hskip-3pt\usebox\BsRbox\hskip-5pt} % need \hskip to make box less wide...

\newsavebox\RBsbox
\savebox\RBsbox{%
 \begin{tikzpicture}[thick,baseline=-2pt,scale=0.05]
  \node at (0,0) {\small $\bullet$};
  \draw (0,0) to (0,5);
 \end{tikzpicture}
}
\newcommand{\RBs}{\hskip-3pt\usebox\RBsbox\hskip-5pt}

\newsavebox\BsBsbox
\savebox\BsBsbox{%
 \begin{tikzpicture}[thick,baseline=-2pt,scale=0.05]
  \draw (0,-5) to (0,5);
  \node at (0,0) {}; % empty node to get the same width
 \end{tikzpicture}
}

\newsavebox\BsRBsbox
\savebox\BsRBsbox{%
 \begin{tikzpicture}[thick,baseline=-2pt,scale=0.05]
  \draw (0,-5) to (0,-1.7);
  \node at (0,-1.7) {\small $\bullet$};
  \node at (0,1.7) {\small $\bullet$};
  \draw (0,1.7) to (0,5);
 \end{tikzpicture}
}
\newcommand{\BsRBs}{\hskip-2pt\usebox\BsRBsbox\hskip-5pt}

%--------------------------------------------------------------------
% t-colored diagrams
%--------------------------------------------------------------------
\usetikzlibrary{snakes}
\tikzset{t/.style={snake=zigzag,segment amplitude=0.7pt,segment length=1mm}}

\newsavebox\BtRbox
\savebox\BtRbox{%
 \begin{tikzpicture}[thick,baseline=-2pt,scale=0.05]
  \draw[t] (0,-5) to (0,0);
  \node at (0,0) {\small $\bullet$};
 \end{tikzpicture}}
\newcommand{\BtR}{\hskip-3pt\usebox\BtRbox\hskip-2pt} % need \hskip to make box less wide...

\newsavebox\RBtbox
\savebox\RBtbox{%
 \begin{tikzpicture}[thick,baseline=-2pt,scale=0.05]
  \node at (0,0) {\small $\bullet$};
  \draw[t] (0,0) to (0,5);
 \end{tikzpicture}
}
\newcommand{\RBt}{\hskip-3pt\usebox\RBtbox\hskip-5pt}

\newsavebox\BtBtbox
\savebox\BtBtbox{%
 \begin{tikzpicture}[thick,baseline=-2pt,scale=0.05]
  \draw[t] (0,-5) to (0,5);
  \node at (0,0) {}; % empty node to get the same width
 \end{tikzpicture}
}

\newsavebox\BtRBtbox
\savebox\BtRBtbox{%
 \begin{tikzpicture}[thick,baseline=-2pt,scale=0.05]
  \draw[t] (0,-5) to (0,-1.7);
  \node at (0,-1.7) {\small $\bullet$};
  \node at (0,1.7) {\small $\bullet$};
  \draw[t] (0,1.7) to (0,5);
 \end{tikzpicture}
}

\newsavebox\BsRBtbox
\savebox\BsRBtbox{%
 \begin{tikzpicture}[thick,baseline=-2pt,scale=0.05]
  \draw (0,-6) to (0,-1.7);
  \node at (0,-1.7) {\small $\bullet$};
  \node at (0,1.7) {\small $\bullet$};
  \draw[t] (0,1.7) to (0,6);
 \end{tikzpicture}
}
\newcommand{\BsRBt}{\hskip-2pt\usebox\BsRBtbox\hskip-5pt}

\newcommand{\BsRtensorBtR}{\hskip-3pt\usebox\BsRbox\hskip-9pt\usebox\BtRbox\hskip-2pt}
\newcommand{\RBstensorRBt}{\hskip-3pt\usebox\RBsbox\hskip-9pt\usebox\RBtbox\hskip-5pt}

%%%%%%%%%%%%%%%%%%%%%%%%%%%%%%%%%%%%%%%%%%%%%%%%%%%%%%%%%%%%%%%%%%%%%%%%%%%

\numberwithin{equation}{section}
\numberwithin{figure}{section}
\newtheorem{thm}{Theorem}[section]
\newtheorem{lem}[thm]{Lemma}

\theoremstyle{definition}
\newtheorem{defn}[thm]{Definition}

\newtheorem{question}[thm]{Question}
\theoremstyle{remark}
\newtheorem{rmk}[thm]{Remark}
\newtheorem{ex}[thm]{Example}

%%%%%%%%%%%%%%%%%%%%%%%%%%%%%%%%%%%%%%%%%%%%%%%%%%%%%%%%%%%%%%%%%%%%%%%%%%%

\title{On monoidal Koszul duality for the Hecke category}
\author{Shotaro Makisumi}
\date{February 25, 2019}
\address{Department of Mathematics, Columbia University, New York, NY 10027, U.S.A.}
\email{makisumi@math.columbia.edu}

\begin{document}

\begin{abstract}
We attempt to give a gentle (though ahistorical) introduction to Koszul duality phenomena for the Hecke category, focusing on the form of this duality studied in joint work \cite{AMRWa, AMRWb} of Achar, Riche, Williamson, and the author. We illustrate some key phenomena and constructions for the simplest nontrivial case of (finite) $\SL_2$ using Soergel bimodules, a concrete algebraic model of the Hecke category.
\end{abstract}
\keywords{Hecke algebra, Hecke category, Soergel bimodules, Koszul duality}
\subjclass[2010]{20F55, 20C08, 20G05}

\maketitle

%%%%%%%%%%%%%%%%%%%%%%%%%%%%%%%%%%%%%%%%%%%%%%%%%%%%%%%%%%%%%%%%%%%%%%%%%%%%%%%%%%%%%%%
\section{Introduction}\label{s:intro}
%%%%%%%%%%%%%%%%%%%%%%%%%%%%%%%%%%%%%%%%%%%%%%%%%%%%%%%%%%%%%%%%%%%%%%%%%%%%%%%%%%%%%%%

Monoidal Koszul duality for the Hecke category categorifies a natural ring involution of the Hecke algebra. Such an equivalence of monoidal categories was originally established in the language of mixed $\ell$-adic sheaves on (Kac--Moody) flag varieties by Bezrukavnikov--Yun \cite{BY}. An important feature of this equivalence is that it involves two rather different categories of sheaves on Langlands dual flag varieties: one side is the more classical Hecke category of Borel-equivariant semisimple complexes, whereas the dual side requires the introduction of what loc.~cit.~calls ``free-monodromic tilting sheaves.''

In recent joint work of Achar, Riche, Williamson, and the author \cite{AMRWa}, we proposed a new construction of the latter category that makes sense for positive characteristic coefficients. This category was used in \cite{AMRWb} to formulate and prove a positive characteristic monoidal Koszul duality for Kac--Moody groups. The latter result, combined with a recent string of advances in modular geometric representation theory (Achar--Rider \cite{ARider}, Mautner--Riche \cite{MR}, Achar--Riche \cite{AR}), yields a character formula for tilting modules of connected reductive groups in characteristic $p$ in terms of $p$-Kazhdan--Lusztig polynomials, confirming (the combinatorial consequence of) the Riche--Williamson conjecture \cite{RW}, for $p$ greater than the Coxeter number.

This article will \emph{not} discuss this or other applications of Koszul duality to representation theory (aside from brief remarks in \S\ref{ss:why-called-koszul}). For that, the reader is referred to the introduction to \cite{AMRWb}, Achar--Riche's survey \cite{AR:formal-koszul}, Williamson's surveys \cite{Wil:Takagi, Wil:ICM}, and Riche's habilitation thesis \cite{Ric:habilitation}.

Instead, our goal is to motivate and explain some of the constructions in \cite{AMRWa}, assuming as little background as possible (some Lie theory and homological algebra). Some key phenomena and constructions can already been seen for the finite flag variety of $\SL_2$, and we illustrate them using an algebraic incarnation of the Hecke category known as Soergel bimodules, which are certain graded bimodules over polynomial rings. However, we will not be able to explain every construction in loc.~cit. In particular, we do not discuss the monoidal structure on free-monodromic complexes (``free-monodromic convolution''), which remains mysterious to this author.

In keeping with this goal, we ignore various technical details, including some that are crucial for the application to modular representation theory. In particular, we mostly ignore the precise characteristic assumptions in \cite{AMRWa, AMRWb}, as well as the bigrading on the various (dgg-)algebras and Hom spaces in \cite{AMRWa}. The reader is directed to the original papers for all precise definitions and statements.

\subsection{Contents}
This article is organized as follows. In \S\ref{s:hecke}, we define the Hecke algebra, explain what ``Hecke category'' means, and define Soergel bimodules. In \S\ref{s:missing-self-duality}, we introduce a ring involution of the Hecke algebra and discuss its categorification (Koszul duality). In particular, \S\ref{ss:why-no-self-duality} explains why the Hecke category cannot be Koszul self-dual. In \S\ref{s:koszul-regular-module}, we discuss a version of Koszul duality for the regular module category of the Hecke category (Theorem~\ref{thm:koszul-regular-rep}), and explain in \S4.4 a key construction from \cite[\S4.4]{AMRWa} (``left-monodromic complexes'' and ``left monodromy action''). In \S\ref{s:mkd}, we state (Theorem~\ref{thm:monoidal-koszul-duality}) the monoidal Koszul duality of \cite{AMRWb}, and give some explanation of the ``free-monodromic complexes'' of \cite[\S5]{AMRWa}.

\subsection{Acknowledgements} It is a pleasure to thank P.~N.~Achar, S.~Riche, and G.~Williamson for the stimulating collaboration. I thank V.~Futorny for inviting me to speak at the XXII Latin American Algebra Colloquium. I also thank G.~Dhillon and U.~Thiel for comments on a draft of this article, and the anonymous referee for a very careful reading and helpful corrections.

Much of this article was written while the author was visiting the Freiburg Institute for Advanced Studies under the program ``Cohomology in Algebraic Geometry and Representation Theory'' organized by A.~Huber-Klawitter, S.~Kebekus, and W.~Soergel. I thank W.~Soergel for the invitation, and FRIAS for the excellent working conditions.

%the author gave on the subject between 2017--2018, in Columbia University, Massachusetts Institute of Technology, Columbia University, University of Oregon, Stanford University, Quito, Perimeter Institute, University of Southern California, California Institute of Technology, University of Maryland, and Mathematical Sciences Research Institute.

%%%%%%%%%%%%%%%%%%%%%%%%%%%%%%%%%%%%%%%%%%%%%%%%%%%%%%%%%%%%%%%%%%%%%%%%%%%%%%%%%%%%%%%
\section{Hecke algebra and Hecke category}\label{s:hecke}
%%%%%%%%%%%%%%%%%%%%%%%%%%%%%%%%%%%%%%%%%%%%%%%%%%%%%%%%%%%%%%%%%%%%%%%%%%%%%%%%%%%%%%%

Throughout, $(W,S)$ is a Coxeter system (with $|S| < \infty$). For $s, t \in S$, let $m_{st}$ be the order (possibly $\infty$) of $st$. An important class of examples is that of crystallographic Coxeter systems, which arise as Weyl groups of Kac--Moody groups (for example, finite and affine Weyl groups).

% -------------------------------------------------------------------------------------
\subsection{Hecke algebra}\label{ss:hecke-alg}
% -------------------------------------------------------------------------------------
We begin at the decategorified level. Let us recall the Hecke algebra, an algebra over Laurent polynomials $\Z[v,v^{-1}]$, following Soergel's normalization \cite{Soe}.

\begin{defn}
The \emph{Hecke algebra} $\H(W)$ is the (unital associative) $\Z[v,v^{-1}]$-algebra generated by symbols $\delta_s$ for $s \in S$, subject to the following two types of relations:
\begin{equation} \label{eqn:hecke-quad-rel}
 \text{(\emph{quadratic relation})} \ \ (\delta_s+v)(\delta_s-v^{-1}) = 0 \ \ \mbox{ for all } s \in S, \hskip 53pt
\end{equation}
\begin{equation} \label{eqn:hecke-braid-rel}
 \text{(\emph{braid relation})} \ \ \underbrace{\delta_s\delta_t\delta_s \cdots}_{m_{st} \text{ terms}} = \underbrace{\delta_t\delta_s\delta_t \cdots}_{m_{st} \text{ terms}} \ \ \mbox{ for all } s,t \in S \text{ with } m_{st} < \infty.
\end{equation}
\end{defn}
For $w \in W$, set $\delta_w := \delta_{s_1} \cdots \delta_{s_k}$, where $\uw = (s_1, \ldots, s_k)$ is a reduced expression for $w$. By Matsumoto's theorem and \eqref{eqn:hecke-braid-rel}, $\delta_w$ is independent of the choice of $\uw$. It is a classical fact that the elements $\{\delta_w\}_{w \in W}$ form a $\Z[v,v^{-1}]$-basis of $\H(W)$, called the \emph{standard basis}.

The Hecke algebra admits a ring map to the group algebra
\begin{equation} \label{eqn:q}
 \H(W) \to \Z[W] = \bigoplus_{w \in W}\Z e_w: \quad \delta_w \mapsto e_w, \quad v \mapsto 1,
\end{equation}
inducing a ring isomorphism $\Z \otimes_{\Z[v,v^{-1}]} \H(W) \simto \Z[W]$, where $\Z[v,v^{-1}] \to \Z$ sends $v \mapsto 1$. For instance, \eqref{eqn:q} sends \eqref{eqn:hecke-quad-rel} to the relation $e_s^2 = 1$. One says that $\H(W)$ is a deformation of $\Z[W]$, with the natural basis of $\Z[W]$ deforming to the standard basis.

In their seminal work \cite{KL}, Kazhdan and Lusztig used an involution $\overline{(-)}: \H(W) \to \H(W)$ (determined by $\overline{\delta_s} = \delta_s^{-1}$ for $s \in S$ and $\overline{v} = v^{-1}$) to define a new $\Z[v,v^{-1}]$-basis $\{b_w\}_{w \in W}$ of $\H(W)$, nowadays called the \emph{Kazhdan--Lusztig basis}. These elements are determined by the conditions
\[
 \overline{b_w} = b_w, \qquad b_w \in \delta_w + \bigoplus_{x < w}v\Z[v]\delta_x,
\]
where $<$ is the Bruhat order on $W$. In our normalization,
\[
 b_\id = \delta_\id = 1, \qquad b_s = \delta_s + v \text{ for } s \in S.
\]
The Kazhdan--Lusztig basis should be understood via categorifications of $\H(W)$, which we describe next.

% -------------------------------------------------------------------------------------
\subsection{Categorification of the Hecke algebra}\label{ss:categorification}
% -------------------------------------------------------------------------------------
The Hecke category is a categorification of the Hecke algebra. Let us explain what this means.

Let $\cA$ be an additive graded monoidal category. In particular, $\cA$ has finite direct sums, and comes equipped with a tensor product $\otimes$ and a ``grading shift'' autoequivalence $(1)$ (compatible with the monoidal structure), whose $n$-th power ($n \in \Z$) will be denoted by $(n)$.

Let $[\cA]_\oplus$ be the split Grothendieck group of the additive category $\cA$. That is, $[\cA]_\oplus$ is the abelian group spanned by isomorphism classes $[B]$ of objects $B \in \cA$, modulo the relations $[B'] + [B''] = [B]$ whenever $B \cong B' \oplus B''$. We make $[\cA]_\oplus$ into a $\Z[v,v^{-1}]$-algebra via $[B] \otimes [B'] = [B \otimes B']$ and $v[B] = [B(1)]$.

For later use, we define the \emph{graded Hom} for such a category by
\begin{equation} \label{eqn:graded-Hom}
 \Hom_\cA^\bullet(X, Y) := \bigoplus_{n \in \Z} \Hom_\cA(X, Y(n))
\end{equation}
for all $X,Y \in \cA$. We also write $\End_\cA^\bullet(X)$ for $\Hom_\cA^\bullet(X,X)$.

The following ad hoc definition illustrates what properties we seek from the Hecke category.
\begin{defn} \label{defn:categorification}
 Let $\cA$ be an additive graded monoidal category. We say that $\cA$ \emph{categorifies} (or is a \emph{categorification of}) $\H(W)$ if it satisfies the following conditions.
\begin{enumerate}
\item It is Krull--Schmidt, with indecomposable objects $B_w$, $w \in W$, and a bijection
\begin{align*}
 \{\text{indecomp.~objects in } \cA \}/\cong &\ \overset{\text{1:1}}{\longleftrightarrow}\  W \times \Z \\
 B_w(n) &\ \longleftrightarrow\  (w,n).
\end{align*}

\item There is an isomorphism of $\Z[v,v^{-1}]$-algebras
\[
 \H(W) \simto [\cA]_\oplus
\]
determined by $b_s \mapsto [B_s]$ for $s \in S$. (Note that $\{b_s\}_{s \in S}$ generate $\H(W)$.) The inverse isomorphism $\Ch: [\cA]_\oplus \simto \H(W)$ is called the \emph{character map}.
\end{enumerate}
\end{defn}
The main consequence of condition (1) is that every object in $\cA$ is isomorphic to a finite direct sum
\[
 \bigoplus_{(w,n) \in W \times \Z} B_w(n)^{\oplus m_{w,n}},
\]
where the multiplicities $m_{w,n} \in \Z_{\ge 0}$ are uniquely determined. It follows that
\[
 [\cA]_\oplus = \bigoplus_{w \in W} \Z[v,v^{-1}] \cdot [B_w],
\]
so that $[\cA]_\oplus$ is isomorphic to $\H(W)$ as a $\Z[v,v^{-1}]$-module. Condition (2) then asks that the multiplications also correspond via a specified map.

% -------------------------------------------------------------------------------------
\subsection{Hecke category}\label{ss:hecke-cat}
% -------------------------------------------------------------------------------------
For finite and affine Weyl groups, a geometric categorification of the Hecke algebra has been known since 1980 \cite{KL2}, and has played an extremely important role in geometric representation theory. Here, we discuss generalizations that have appeared since, starting with the work of Soergel \cite{Soe:bimod}.

%In geometric representation theory, there are a number of related categories that go by the name ``Hecke category.'' Let us clarify which ones we mean.

Unlike the Hecke algebra, which only depends on the Coxeter system, its categorification depends on an additional datum. A \emph{realization} of $(W,S)$ over a field $\bk$ (for simplicity), in the sense of Elias--Williamson \cite[\S3.1]{EW-soergel-calculus}, is a triple
\begin{equation} \label{eqn:realization}
 \fh = (V, \{\alpha_s^\vee\}_{s \in S} \subset V, \{\alpha_s\}_{s \in S} \subset V^*),
\end{equation}
where $V$ is a finite-dimensional $\bk$-vector space and $V^* = \Hom_\bk(V,\bk)$, equipped with \emph{simple roots} $\alpha_s$ and \emph{simple coroots} $\alpha_s^\vee$ indexed by $s \in S$. These elements are required to satisfy certain conditions, most of which are familiar from basic Lie theory. In particular, we ask that $\alpha_s(\alpha_s^\vee) = 2$ for all $s \in S$, and that the assignment $s \mapsto (v \mapsto v - \alpha_s(v)\alpha_s^\vee)$ defines a representation of $W$ on $V$. We assume moreover that the realization is \emph{balanced} and satisfies \emph{Demazure surjectivity}; for precise definitions, see \cite{EW-soergel-calculus} (see also \cite[\S2.1]{AMRWa}).

The reader should ignore these technicalities and instead keep in mind the important class of realizations in Example~\ref{ex:cartan-realizations} below. We mention the general notion of a realization only to emphasize the following two points: the Hecke category can be defined starting from such a combinatorial datum, similar to a root datum but not necessarily arising from a reductive or even Kac--Moody group; and Koszul duality phenomena are expected even in this generality (see Remark~\ref{rmk:non-crystallographic}).
\begin{ex}[Cartan realizations] \label{ex:cartan-realizations}
 Let $G$ be a connected reductive group over $\C$. Choose a Borel subgroup and a maximal torus $B \supset T$. Let $W$ be the Weyl group and $\{\alpha_s\}_{s \in S}$ (resp.~$\{\alpha_s^\vee\}_{s \in S}$) the simple roots (resp.~simple coroots) determined by these choices. Then we obtain a realization of the (finite crystallographic) Coxeter system $(W, S)$ over any field $\bk$ by setting $V := \bk \otimes_\Z X$, where $X$ is the character lattice of $T$. (Then $V^*$ is naturally identified with the base change to $\bk$ of the cocharacter lattice.)
 
 More generally, one may define a realization (over any field $\bk$) starting from a generalized Cartan matrix and an associated Kac--Moody root datum (giving rise to a Kac--Moody group). Realizations arising in this way are called \emph{Cartan realizations}. They are always balanced, and they satisfy Demazure surjectivity possibly with the further assumption that $\bk$ is not of characteristic $2$. For more details, see \cite[\S10.1]{AMRWa} and references therein.
\end{ex}

Let $(\fh,W)$ be a realization over $\bk$. The \emph{Hecke category} is a certain $\bk$-linear additive graded monoidal category $\cH(\fh,W)$ that categorifies $\H(W)$ in the sense of Definition~\ref{defn:categorification}. This category has several different incarnations. In this article, we focus on the most elementary one, the category of \emph{Soergel bimodules}, which is only ``correct'' for some realizations; for a discussion of better-behaved incarnations of the Hecke category, see the discussion following Example~\ref{ex:sbim-sl2} below. However, the more concrete setting of Soergel bimodules will suffice to illustrate some key ideas of Koszul duality.

Let us define Soergel bimodules. Consider the symmetric algebra
\[
 R = \Sym(V^*),
\]
graded with $\deg(V^*) = 2$. In other words, $R$ consists of polynomials in a fixed basis of $V^*$, with double the usual degree. Soergel bimodules form a full subcategory of $R\lh\gmod\lh R$, the category of ($\Z$-)graded $R$-bimodules and graded $R$-bimodule homomorphisms (of degree $0$). Note that $R\lh\gmod\lh R$ has the structures needed to categorify a $\Z[v,v^{-1}]$-algebra: it is additive, monoidal under the tensor product $\otimes_R$, and has a grading shift autoequivalence $(1)$ defined on a graded module $M = \bigoplus_{i \in \Z}M_i$ by $(M(1))_i = M_{i+1}$.

The action of $W$ on $V$ induces an action on $R$. For $s \in S$, consider the graded $R$-bimodule
\[
 B_s := R \otimes_{R^s} R(1),
\]
where $R^s \subset R$ is the $s$-invariants.
\begin{defn} \label{defn:SBim}
 The category $\SBim(\fh,W)$ of \emph{Soergel bimodules} is the smallest full subcategory of $R\lh\gmod\lh R$ that contains $B_s$ for $s \in S$ and is closed under taking finite direct sums $\oplus$, finite tensor products $\otimes_R$, grading shift $(1)$, direct summand $\overset{\oplus}{\subset}$, and under isomorphism. In symbols,
 \[
  \SBim(\fh,W) := \la B_s : s \in S \ra_{\oplus, \otimes_R, (1), \overset{\oplus}{\subset}, \cong} \subset R\lh\gmod\lh R.
 \]
\end{defn}

\begin{ex}[Soergel bimodules for $\SL_2$] \label{ex:sbim-sl2}
Consider the Cartan realization $\slreal$ of $\SL_2$ over a field $\bk$ of characteristic not equal to $2$. That is, $W = S_2 = \{\id, s\}$, $S = \{s\}$, and $V = \bk\alpha_s^\vee$ and $V^* = \bk\alpha_s$, where $\alpha_s(\alpha_s^\vee) = 2$. Then $S_2$ acts on $V$ by $s(\alpha_s^\vee) = -\alpha_s^\vee$. Soergel bimodules are certain graded bimodules over $R = \bk[\alpha_s]$, where $\deg(\alpha_s) = 2$. Since $s(\alpha_s) = -\alpha_s$, we have $R^s = \bk[\alpha_s^2]$.

What are the indecomposable Soergel bimodules up to grading shift and isomorphism? To begin, we have $R$ (the monoidal identity) and $B_s$. By definition, to find new indecomposable bimodules, one should consider direct summands of tensor products (over $R$) of the bimodules we know. Tensor product with $R$ produces nothing new. For $B_s$, using the $R^s$-bimodule decomposition $R = R^s \oplus (R^s\cdot \alpha_s)$, we get
\begin{multline*}
 B_s \otimes_R B_s = R \otimes_{R^s} R \otimes_{R^s} R(2) \cong R \otimes_{R^s} (R^s \oplus R^s(-2)) \otimes_{R^s} R  \\
 \cong R \otimes_{R^s} R \oplus R \otimes_{R^s} R(2) = B_s(-1) \oplus B_s(1).
\end{multline*}
It follows that $B_\id := R$ and $B_s$ are the only indecomposable Soergel bimodules up to grading shift and isomorphism. Moreover, the isomorphism $B_s \otimes_R B_s \cong B_s(-1) \oplus B_s(1)$ decategorifies to $b_s^2 = (v+v^{-1})b_s$, which is equivalent to the quadratic relation \eqref{eqn:hecke-quad-rel}. We therefore have a $\Z[v,v^{-1}]$-algebra isomorphism
\[
 \H(S_2) \overset{\sim}{\longrightarrow} [\SBim(\slreal,S_2)]_\oplus
\]
sending $1 \mapsto [R]$ and $b_s \mapsto [B_s]$, and $\SBim(\slreal,S_2)$ categorifies $\H(S_2)$.
\end{ex}

In \cite{Soe:bimod}, Soergel originally considered his bimodules for realizations satisfying the rather restrictive ``reflection faithfulness'' condition (any Coxeter system admits such a realization over $\R$). Under this assumption, he showed that $\SBim(\fh,W)$ categorifies $\H(W)$.

For general realizations (e.g.~for $W$ an affine Weyl group and $\bk = \overline{\mathbb{F}}_p$), it is expected that Soergel bimodules no longer categorify $\H(W)$. Since Soergel's work, two other incarnations of the Hecke category have appeared, which almost always categorify $\H(W)$:\footnote{There is yet another candidate for the Hecke category: Braden--MacPherson (BMP) sheaves on Bruhat moment graphs, studied by Fiebig \cite{Fie2, Fie1}. Beyond reflection faithful realizations, where this category is equivalent to that of Soergel bimodules, it is not known when BMP sheaves categorify $\H(W)$.}
\begin{enumerate}
 \item \emph{(Geometric)} For Cartan realizations of crystallographic Coxeter groups (Example~\ref{ex:cartan-realizations}), one may consider Borel-equivariant parity complexes (in the sense of Juteau--Mautner--Williamson \cite{JMW}) on the associated Kac--Moody flag variety; see \cite[Part~3]{RW}. This generalizes the original geometric Hecke category, going back to Kazhdan--Lusztig, to positive characteristic coefficients.
 \item \emph{(Diagrammatic)} For realizations of an arbitrary Coxeter system, Elias--Williamson \cite{EW-soergel-calculus} (building on earlier work by Elias--Khovanov \cite{EK} and Elias \cite{Eli}) have defined a diagrammatic Hecke category by generators and relations.\footnote{More precisely, what categorifies the Hecke algebra is the (graded) Karoubi envelope of their diagrammatic category. The Elias--Williamson diagrammatic category is defined even for $\bk$ an integral domain, but its Karoubi envelope does not behave well in this generality.}
\end{enumerate}
For more on these two incarnations of the Hecke category, see Williamson's ICM report \cite{Wil:ICM}.

Let us explain the connection of these categorifications to the Kazhdan--Lusztig basis. In general, a categorification of $\H(W)$ as in Definition~\ref{defn:categorification} yields a $\Z[v,v^{-1}]$-basis $\{\Ch([B_w])\}_{w \in W}$ of $\H(W)$. For specific realizations in characteristic $0$, it is a deep fact (originally proved for Weyl groups by Kazhdan--Lusztig using the Decomposition Theorem in finite field algebraic geometry, and for arbitrary Coxeter systems by Elias--Williamson using a Hodge theory of Soergel bimodules \cite{EW:hodge}) that this categorically defined basis agrees with the combinatorially defined Kazhdan--Lusztig basis. This immediately implies remarkable positivity properties of the latter (positivity of Kazhdan--Lusztig polynomials and structure constants).

Consider a Cartan realization over a field of characteristic $p > 0$. Then the basis defined by the Hecke category is called the \emph{$p$-canonical} (or \emph{$p$-Kazhdan--Lusztig}) \emph{basis} and is denoted by $\{{}^pb_w\}_{w \in W}$ (it only depends on the root datum and $p$). From base change considerations, one sees that
\begin{equation} \label{eqn:pcan-pos}
 {}^pb_w \in \bigoplus_{x \in W}\Z_{\ge 0}[v,v^{-1}]\cdot b_x.
\end{equation}
An emerging new paradigm is that the $p$-canonical basis should control the characteristic $p$ representation theory of Lie-theoretic objects, just as the Kazhdan--Lusztig basis controls their characteristic $0$ representation theory.

\begin{rmk} \label{rmk:non-crystallographic}
Although a large portion of \cite{AMRWa} is written in the language of the Elias--Williamson diagrammatic category, geometry was used to establish several key properties. As a result, the Koszul dualities of \cite{AMRWb} are only proved for Cartan realizations. However, these results are expected to hold for more general realizations, even of non-crystallographic Coxeter systems (but perhaps with stronger characteristic assumptions). For some results in this direction, see \cite{Maka, Makb, ARV}.
\end{rmk}

%%%%%%%%%%%%%%%%%%%%%%%%%%%%%%%%%%%%%%%%%%%%%%%%%%%%%%%%%%%%%%%%%%%%%%%%%%%%%%%%%%%%%%%
\section{A missing self-duality}\label{s:missing-self-duality}
%%%%%%%%%%%%%%%%%%%%%%%%%%%%%%%%%%%%%%%%%%%%%%%%%%%%%%%%%%%%%%%%%%%%%%%%%%%%%%%%%%%%%%%

% -------------------------------------------------------------------------------------
\subsection{A ring involution}\label{ss:involution}
% -------------------------------------------------------------------------------------
At the decategorified level, the main player of this article is a certain ring involution of $\H(W)$ (different from the Kazhdan--Lusztig involution). One sees from the defining relations \eqref{eqn:hecke-quad-rel} and \eqref{eqn:hecke-braid-rel} that there is a ring involution
\[
 \iota: \H(W) \to \H(W)
\]
determined by
\[
 \iota(\delta_s) = \delta_s \quad \text{for } s \in S, \qquad \iota(v) = -v^{-1}.
\]
Applying $\iota$ to the Kazhdan--Lusztig basis yields a basis $\{t_w := \iota(b_w)\}_{w \in W}$, characterized by conditions similar to those for $\{b_w\}_{w \in W}$. For example,
\[
 t_\id = \delta_\id = 1, \qquad t_s = \delta_s - v^{-1} \quad \text{for } s \in S.
\]
\begin{rmk}
In Kazhdan--Lusztig's original paper \cite{KL}, $b_w$ is denoted by $C'_w$, while $t_w$ is denoted by $C_w$.
\end{rmk}

Fix a realization $\fh$ of $(W,S)$. One can ask if $\iota$ can be lifted to $\cH(\fh,W)$.
\begin{question}[Naive Hope 1]
Is there a monoidal autoequivalence of $\cH(\fh,W)$ that categorifies $\iota$? More precisely, is there a monoidal equivalence
\[
 \kappa: \cH(\fh,W) \simto \cH(\fh,W),
\]
such that the isomorphism $\Ch: [\cH(\fh,W)]_\oplus \simto \H(W)$ of Definition~\ref{defn:categorification}(2) identifies the induced ring isomorphism $[\kappa]_\oplus$ with $\iota$?
\end{question}
As stated, this is clearly impossible. The equation $\iota(v) = -v^{-1}$ forces $[\kappa(B_\id(1))]$ $= -v^{-1}$, whereas by Definition~\ref{defn:categorification}(1) and \eqref{eqn:pcan-pos}, the class of every object in $\cH(\fh,W)$ is a $\Z_{\ge 0}[v,v^{-1}]$-linear combination of $b_w$.

A standard way to handle minus signs in categorification is to pass to the bounded homotopy category $\Kb\cH(\fh,W)$.\footnote{Since $\cH(\fh,W)$ is only additive, not abelian, one cannot talk about its derived category.} In the triangulated Grothendieck group $[\Kb\cH(\fh,W)]_\Delta$, the cohomological shift $[1]$ becomes multiplication by $-1$. More precisely, the full embedding of $\cH(\fh,W)$ into $\Kb\cH(\fh,W)$ as complexes supported in cohomological degree $0$ induces a $\Z[v,v^{-1}]$-algebra isomorphism $[\cH(\fh,W)]_\oplus \simto [\Kb\cH(\fh,W)]_\Delta$ with inverse $e([B^\bullet]) = \sum_{i \in \Z} (-1)^i[B^i]$ for any bounded complex $B^\bullet$.
\begin{question}[Naive Hope 2] \label{q:naive2}
Is there a monoidal triangulated autoequivalence of $\Kb\cH(\fh,W)$ that categorifies $\iota$? More precisely, is there a monoidal triangulated equivalence
\[
 \kappa: \Kb\cH(\fh,W) \simto \Kb\cH(\fh,W)
\]
such that the isomorphism $\Ch \circ e : [\Kb\cH(\fh,W)]_\Delta \simto \H(W)$ identifies the induced ring isomorphism $[\kappa]_\Delta$ with $\iota$?
\end{question}
This, too, turns out to be impossible, for a simple reason we now explain.

% -------------------------------------------------------------------------------------
\subsection{Why the Hecke category cannot be Koszul self-dual}\label{ss:why-no-self-duality}
% -------------------------------------------------------------------------------------
As in Example~\ref{ex:sbim-sl2}, consider Soergel bimodules for the $\SL_2$ realization $\slreal$. There are graded $R$-bimodule homomorphisms
\begin{equation} \label{eqn:sbim-sl2-dot-maps}
 \BsR: B_s \to R(1): f \otimes g \mapsto fg, \qquad \RBs: R(-1) \to B_s: f \mapsto f \cdot \Delta_s,
\end{equation}
called ``dot morphisms.''\footnote{This terminology and the symbols $\BsR$ and $\RBs$ come from the analogous morphisms in the diagrammatic Hecke category.} Here, $\Delta_s := \frac{\alpha_s}{2} \otimes 1 + 1 \otimes \frac{\alpha_s}{2}$ is the nonzero element of minimal degree, unique up to scalar, satisfying $f\cdot\Delta_s = \Delta_s\cdot f$ for all $f \in R$.

Every graded Hom (see \eqref{eqn:graded-Hom}) is itself naturally a graded $R$-bimodule. It is easy to check that
\begin{equation} \label{eqn:gEnd-R}
 \End^\bullet(R) = R \cdot \id_R,
\end{equation}
\begin{equation} \label{eqn:sbim-sl2-homs}
 \Hom^\bullet(B_s, R(1)) = R \cdot \BsR, \qquad \Hom^\bullet(R(-1), B_s) = R \cdot \RBs.
\end{equation}

We now explain why there cannot be an equivalence $\kappa$ as in Question~\ref{q:naive2}. In the following, we suppress the isomorphism $\Ch \circ e: [\Kb\SBim(\slreal,S_2)]_\Delta \simto \H(S_2)$. Any such $\kappa$ must send the indecomposable bimodule $B_s$ to an indecomposable bounded complex $\kappa(B_s)$ in $\Kb\SBim(\slreal,S_2)$ with class
\[
 \iota(b_s) = t_s = \delta_s - v^{-1} = b_s - v - v^{-1} \in \H(W).
\]
By \eqref{eqn:sbim-sl2-homs}, these conditions force $\kappa(B_s)$ to be isomorphic to the ``complex''
\begin{equation} \label{eqn:Ts}
T_s := \quad
\begin{tikzcd}
R(1) \\
B_s \ar[u, "\BsR"] \\
R(-1) \ar[u, "\RBs"]
\end{tikzcd}
\end{equation}
in degrees $-1$ through $1$, or $T_s[2m]$ for some $m \in \Z$. But $T_s$ is not a complex!
\begin{equation} \label{eqn:mult-comult}
 \BsR \circ \RBs = \alpha_s \cdot \id_R \neq 0.
\end{equation}

For general $(\fh,W)$, for each $s \in S$ there are ``$s$-colored'' dot morphisms
\[
 \BsR: B_s \to B_\id(1), \qquad \RBs: B_\id(-1) \to B_s
\]
in $\cH(\fh,W)$. Analogous to \eqref{eqn:gEnd-R}, we have a natural identification
\[
 \End^\bullet(B_\id) = R,
\]
so that tensor product with $B_\id$ makes every graded Hom into a graded $R$-bimodule. The analogues of \eqref{eqn:gEnd-R}, \eqref{eqn:sbim-sl2-homs}, \eqref{eqn:mult-comult} (with $R(n)$ replaced by $B_\id(n)$) therefore still make sense, and in fact remain true. Then the analogue of $T_s$ \eqref{eqn:Ts} is again not in $\Kb\cH(\fh,W)$, so that $\kappa$ has nowhere to send $B_s$ to. This is the essential obstruction to the naive hope in Question~\ref{q:naive2}.

Let us pretend for a moment that such a $\kappa$ existed. Note that $\Kb\cH(\fh,W)$ has two shifts: in addition to the cohomological shift $[1]$ of the homotopy category, the grading shift $(1)$ of $\cH(\fh,W)$ can be applied term-by-term, giving an endofunctor of $\Kb\cH(\fh,W)$ that we again denote by $(1)$. Since $\kappa$ is triangulated, we have $\kappa \circ [1] \cong [1] \circ \kappa$. However, $\iota(v) = -v^{-1}$ implies that $\kappa \circ (1) \not\cong (1) \circ \kappa$, and instead suggests a natural isomorphism
\begin{equation} \label{eqn:koszul-shifts}
 \kappa \circ (1) \cong [1](-1) \circ \kappa
\end{equation}
(since $(-1)$ and $[1]$ decategorify to $v^{-1}$ and $-1$, respectively), or perhaps $[1]$ replaced by some odd integer power $[m]$. Let us introduce the notation
\[
 \la1\ra := [1](-1)
\]
for the combined shift on the right hand side of \eqref{eqn:koszul-shifts}.

Each of the remaining two sections of this article describes a modification of the duality of Question~\ref{q:naive2} that does exist. Each equivalence satisfies \eqref{eqn:koszul-shifts} and will be called \emph{Koszul duality}.

%%%%%%%%%%%%%%%%%%%%%%%%%%%%%%%%%%%%%%%%%%%%%%%%%%%%%%%%%%%%%%%%%%%%%%%%%%%%%%%%%%%%%%%
\section{Koszul duality for the regular representation}\label{s:koszul-regular-module}
%%%%%%%%%%%%%%%%%%%%%%%%%%%%%%%%%%%%%%%%%%%%%%%%%%%%%%%%%%%%%%%%%%%%%%%%%%%%%%%%%%%%%%%
In this section, we describe a Koszul duality (Theorem~\ref{thm:koszul-regular-rep}) for a quotient of the Hecke category that categorifies the regular representation of the Hecke algebra.

% -------------------------------------------------------------------------------------
\subsection{A quotient of the Hecke category} \label{ss:quotients}
% -------------------------------------------------------------------------------------
The considerations in \S\ref{ss:why-no-self-duality} also suggest a naive fix for the missing duality: simply work in a quotient of the Hecke category where $\alpha_s \cdot \id_{B_\id} = 0$ for all $s \in S$. This turns out to almost work.

\begin{defn}
 The \emph{left quotient} $\ocH(\fh,W)$ is the category with the same objects as $\cH(\fh,W)$, but whose graded Homs are given by
 \[
  \Hom_{\ocH(\fh,W)}^\bullet(X,Y) := \bk \otimes_R \Hom_{\cH(\fh,W)}^\bullet(X, Y).
 \]
 Here, $\bk$ is viewed as a graded $R$-module via the counit $\epsilon_R: R \to \bk$ (sending $V^*$ to $0$).
\end{defn}
In other words, there is a natural quotient functor
\begin{equation} \label{eqn:cH-to-ocH}
 \For: \cH(\fh,W) \to \ocH(\fh,W)
\end{equation}
that is the identity on objects, and whose induced maps on graded Homs are surjective with kernel spanned by morphisms of the form $\lambda \cdot f$, where $\lambda \in R$ is a polynomial with constant term $0$. It is known that each $B_w$ remains indecomposable under $\For$, so $\For$ induces a $\Z[v,v^{-1}]$-module isomorphism $[\cH(\fh,W)]_\oplus \simto [\ocH(\fh,W)]_\oplus$. While $\ocH(\fh,W)$ is not monoidal, it is a module category for $\cH(\fh,W)$ acting on the right. In fact, it categorifies the right regular module of $\H(W)$ (i.e.~$\H(W)$ acting on itself by right multiplication).

\begin{rmk}
 In the geometric Hecke category (see the discussion after Example~\ref{ex:sbim-sl2}), the quotient functor \eqref{eqn:cH-to-ocH} corresponds to the forgetful functor from Borel-equivariant to Borel-constructible parity complexes.
\end{rmk}

We similarly define the \emph{right quotient} $\ucH(\fh,W)$ via $(-) \otimes_R \bk$, categorifying the left regular representation. There is an ``inversion'' equivalence
\begin{equation} \label{eqn:inversion}
 \inv: \ocH(\fh,W) \simto \ucH(\fh,W)
\end{equation}
intertwining the left and right $R$-actions, and sending $\inv(B_w) \cong B_{w^{-1}}$ for all $w \in W$.

The non-complex $T_s$ of \eqref{eqn:Ts} becomes an actual object in each quotient category. This is an example of an indecomposable ``tilting complex.'' More generally, just as $\Kb\ocH(\fh,W)$ contains the full additive subcategory $\ocH(\fh,W)$ stable under the grading shift $(1)$, it also contains a full additive subcategory of tilting complexes\footnote{Very briefly, the triangulated category $\Kb\ocH(\fh,W)$ admits a ``perverse'' t-structure, whose heart (``mixed perverse complexes'') is a graded highest weight category with shift $\la1\ra$ (see \cite{AR:mixed-derived, Maka, ARV}). For such a category, there is a notion of tilting objects and a classification theorem for the indecomposable tilting objects.} that is stable under the Koszul dual shift $\la1\ra = [1](-1)$. There are indecomposable tilting complexes $T_w$, $w \in W$, and a bijection
\begin{align*}
 \{\text{indecomp.~tilting complexes} \}/\cong &\ \overset{\text{1:1}}{\longleftrightarrow}\  W \times \Z \\
 T_w\la n\ra &\ \longleftrightarrow\  (w,n),
\end{align*}
Koszul dual to the bijection in Definition~\ref{defn:categorification}(1), or rather its analogue for $\ocH(\fh,W)$.

These quotient categories are still not quite self-dual. Given a realization $\fh$ as in \eqref{eqn:realization}, the \emph{dual realization}
\[
 \fh^* := (V^*, \{\alpha_s\}_{s \in S} \subset V^*, \{\alpha_s^\vee\}_{s \in S} \subset V)
\]
is obtained by exchanging $V$ with $V^*$ and simple roots with simple coroots. Let $R^\vee = \Sym(V)$, again graded with $\deg V = 2$. For Cartan realizations of a reductive or Kac--Moody group, the dual realization is associated to the Langlands dual group.

% -------------------------------------------------------------------------------------
\subsection{Statement}\label{ss:self-duality}
% -------------------------------------------------------------------------------------
Our first Koszul duality relates the left and right regular module categories for dual realizations.
\begin{thm}[Koszul duality for Kac--Moody groups \cite{AMRWb}] \label{thm:koszul-regular-rep}
 Let $(\fh,W)$ be a Cartan realization over a field $\bk$ of characteristic not equal to $2$. There is a triangulated equivalence
 \[
  \kappa: \Kb\ocH(\fh, W) \simto \Kb\ucH(\fh^*, W)
 \]
 satisfying $\kappa \circ (1) \cong \la1\ra \circ \kappa$. Moreover, $\kappa(B_w) \cong T_w$ and $\kappa(T_w) \cong B_w$, and $\kappa$ categorifies $\iota$.
\end{thm}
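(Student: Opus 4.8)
The plan is to construct $\kappa$ as a composite of two equivalences and then verify its formal properties (shift behavior, action on objects, decategorification) separately. First I would build a "Koszul dual presentation" of $\Kb\ocH(\fh,W)$: the left quotient is a module category over the monoidal $\cH(\fh,W)$, and one expects it to be equivalent, as a module category, to a category of modules (or complexes of modules) over the graded algebra $\bigoplus_{w}\uEnd^\bullet(B_w)$ (or better, over a suitable dg-enhancement built from the standard/costandard objects). The construction in \cite[\S4.4]{AMRWa} of left-monodromic complexes and the left monodromy action supplies precisely such an enhancement: on $\Kb\ucH(\fh^*,W)$, the tilting complexes $T_w$ carry a right action of $R^\vee$ (the "left monodromy" action), and so one gets a graded algebra $\mathcal{A}^\vee = \bigoplus_{w}\uHom^\bullet(T_{w_1},T_{w_2})$ acting appropriately. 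The heart of the matter is then to produce a graded algebra isomorphism $\mathcal{A} \cong \mathcal{A}^\vee$ exchanging the roles of $R$ and $R^\vee$ and of simple (co)standard objects, from which $\kappa$ follows by transport of structure along Morita-type equivalences.

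Second, I would pin down the functor on the level of generating objects. One wants $\kappa(B_\id) \cong T_\id$ (both are, morally, the skyscraper at the base point vs.\ the "big tilting" on the open cell — in the $\SL_2$ case $B_\id = R$ maps to $T_\id = R$ in the quotient), and then $\kappa(B_s) \cong T_s$ where $T_s$ is the complex in \eqref{eqn:Ts} (now an honest object of the quotient, since $\alpha_s\cdot\id_{B_\id}=0$). Using that $\{B_s\}_{s\in S}$ together with grading shifts generate $\cH(\fh,W)$ under cones and sums, and that $\kappa$ is triangulated, this determines $\kappa$ on a generating set; the compatibility $\kappa\circ(1)\cong\la1\ra\circ\kappa$ is then forced by the requirement that $\kappa(B_s(1))$ be a shift of $\kappa(B_s)=T_s$ in the unique way consistent with the graded Hom computations \eqref{eqn:sbim-sl2-homs} and their general-realization analogues — precisely the computation in \S\ref{ss:why-no-self-duality} showing $T_s$ lives in cohomological degrees $-1$ to $1$ with the "Koszul" shift $\la1\ra$. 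The claims $\kappa(B_w)\cong T_w$ for general $w$ then follow by induction on Bruhat order: one expresses $B_w$ (up to lower terms in the Krull--Schmidt filtration) via a Bott--Samelson-type object $B_{s_1}\otimes\cdots\otimes B_{s_k}$, applies $\kappa$ termwise using monoidality on the source and the module structure on the target, and extracts the indecomposable summand; that $\kappa$ is an equivalence guarantees it sends indecomposables to indecomposables, and a dimension/character count identifies the summand as $T_w$. Symmetrically $\kappa^{-1}(T_w)\cong B_w$, i.e.\ $\kappa(T_w)\cong B_w$, using that $\kappa$ is an equivalence and the two bijections of indexing sets (Definition~\ref{defn:categorification}(1) and its tilting analogue) are interchanged.

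Third, for the decategorification statement I would argue as follows. The equivalence $\kappa$ induces a $\Z[v,v^{-1}]$-module isomorphism on triangulated Grothendieck groups $[\Kb\ocH(\fh,W)]_\Delta \to [\Kb\ucH(\fh^*,W)]_\Delta$; both sides are identified with $\H(W)$ (as modules) via $\Ch\circ e$ composed with the inversion equivalence \eqref{eqn:inversion} if one wishes to land on the same side. Because $\kappa\circ(1)\cong\la1\ra\circ\kappa = [1](-1)\circ\kappa$, and since $[1]$ decategorifies to $-1$ and $(-1)$ to $v^{-1}$, the induced map sends $v\cdot[X]$ to $-v^{-1}\cdot[\kappa X]$; that is, $[\kappa]$ is $v\mapsto -v^{-1}$-semilinear, matching $\iota$. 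To see it is exactly $\iota$ and not merely a semilinear automorphism, it suffices to check it on a generating family of classes: $[\kappa(B_\id)] = [T_\id] = 1 = \iota(1)$ and $[\kappa(B_s)] = [T_s] = b_s - v - v^{-1} = \delta_s - v^{-1} = t_s = \iota(b_s)$, the middle equality being the triangulated-Grothendieck-group computation $[T_s] = e([T_s^\bullet]) = [R(-1)] - [B_s] + [R(1)] = v^{-1} + v - b_s$ read backwards. Since $\{b_s\}$ generate $\H(W)$ as a ring (and the regular module is cyclic), semilinearity plus agreement on generators forces $[\kappa] = \iota$ on the nose.

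The main obstacle is the first step: constructing $\kappa$ itself, i.e.\ producing the algebra isomorphism $\mathcal{A}\cong\mathcal{A}^\vee$ and promoting it to a triangulated functor. This is where the genuinely hard input of \cite{AMRWa, AMRWb} enters — one needs the dg-enhancements (the formality-type results and the "mixed derived" formalism alluded to in the footnote about the perverse t-structure), and crucially the left monodromy action of $R^\vee$ on the target, which is exactly the construction isolated in \cite[\S4.4]{AMRWa} and cannot be seen from Soergel bimodules on one side alone. The verification of shifts, the action on objects, and the decategorification — the "Moreover" clauses — are then comparatively formal, being essentially bookkeeping against the explicit generators $B_\id, B_s$ and $T_\id, T_s$ computed above.
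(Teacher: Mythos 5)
Your proposal correctly identifies the left monodromy action as a crucial ingredient and your bookkeeping in the second and third steps (determining $\kappa$ on generators, and the semilinearity-plus-generators argument for $[\kappa]=\iota$) is sound, modulo a sign slip in the Euler-characteristic computation: with $T_s$ in cohomological degrees $-1,0,1$ one has $e([T_s]) = -[R(-1)]+[B_s]-[R(1)] = b_s - v - v^{-1}$, not $v^{-1}+v-b_s$.

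The genuine gap is in your first step, and it is exactly the step you flag as ``the main obstacle.'' You propose to build $\kappa$ by identifying $\Kb\ocH(\fh,W)$ and $\Kb\ucH(\fh^*,W)$ with (dg-)module categories over endomorphism algebras $\mathcal{A}$, $\mathcal{A}^\vee$ of the generating (or tilting) objects and then transporting structure along a Morita-type equivalence once one has $\mathcal{A}\cong\mathcal{A}^\vee$. You attribute the hard input to ``dg-enhancements (the formality-type results \ldots).'' That is the classical characteristic-zero route (Soergel, Beilinson--Ginzburg--Soergel, Bezrukavnikov--Yun), where formality of the relevant dg Ext-algebras is a theorem. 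But formality is precisely what is \emph{not} available over a field of positive characteristic, which is the generality stated in the theorem; invoking it leaves the construction of $\kappa$ unsupported in exactly the cases that motivated \cite{AMRWa,AMRWb}. The paper does not prove Theorem~\ref{thm:koszul-regular-rep} this way: as noted at the start of \S\ref{ss:LM}, it is deduced as a \emph{consequence} of the monoidal Koszul duality of Theorem~\ref{thm:monoidal-koszul-duality}, and that monoidal functor $\kappamon$ is constructed by generators and relations on the Elias--Williamson diagrammatic Hecke category, using the free-monodromic category $(\TiltFM(\fh,W),\hatstar)$ of \S\ref{ss:FM}. Your outline never introduces the monoidal upgrade or the free-monodromic side at all, and so it cannot reproduce the actual route. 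In short: the left monodromy action you correctly isolate gets fed into a generators-and-relations construction of a monoidal functor, not into an algebra-isomorphism/Morita argument, and it is this that lets the proof avoid any formality input.
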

When $B_w$ categorifies the Kazhdan--Lusztig basis $b_w$, Theorem~\ref{thm:koszul-regular-rep} implies that $T_w$ categorifies the dual basis $t_w = \iota(b_w)$. In general, $\{B_w\}$ and $\{T_w\}$ give two bases of the Hecke algebra that are exchanged by $\iota$.

In \S\ref{ss:LM}, we discuss a key ingredient towards the proof of Theorem~\ref{thm:koszul-regular-rep}.

\begin{ex}
Since the $\SL_2$ realization $\slreal$ (see Example~\ref{ex:sbim-sl2}) is self-dual, Theorem~\ref{thm:koszul-regular-rep} combined with inversion \eqref{eqn:inversion} gives a triangulated autoequivalence
\[
 \kappa': \Kb\oSBim(\slreal,S_2) \simto \Kb\oSBim(\slreal,S_2)
\]
satisfying $\kappa' \circ (1) \cong \la1\ra \circ \kappa'$, and sending $\kappa'(B_s) \cong T_s$ and $\kappa'(T_s) \cong B_s$.
\end{ex}

% -------------------------------------------------------------------------------------
\subsection{Why ``Koszul''?}\label{ss:why-called-koszul}
% -------------------------------------------------------------------------------------
Classically, Koszul duality relates seemingly unrelated graded rings $A$ and $A^!$, usually via a derived equivalence of their graded module categories $A\lh\gmod$ and $A^!\lh\gmod$ (with appropriate finiteness conditions). The most classical example is due to Bernstein--Gelfand--Gelfand \cite{BGG} and goes as follows. Let $V$ be a finite-dimensional $\bk$-vector space, let $V^* = \Hom_\bk(V, \bk)$ be its dual, and consider the symmetric and exterior algebras $R = \Sym(V^*)$ and $\Lambda = \Lambda(V)$, graded with $\deg V^* = 1$ and $\deg V = -1$. Then there exists a triangulated equivalence
\[
 \Db(R\lh\gmod) \supset \la \bk \ra_\Delta \xrightarrow[\sim]{\kappa^{\mathrm{BGG}}} \la \Lambda \ra_\Delta \subset \Db(\Lambda\lh\gmod)
\]
satisfying
\begin{equation} \label{eqn:koszul-shifts-2}
 \kappa^{\mathrm{BGG}} \circ \la 1 \ra \cong \la-1\ra[1] \circ \kappa^{\mathrm{BGG}},
\end{equation}
where $\la1\ra$ is the endofunctor that shifts the module grading down by $1$ (applied term-by-term to a complex). In particular, \eqref{eqn:koszul-shifts-2} means that one cannot forget the gradings to obtain a functor relating their ungraded module categories $\Db(R\lh\ngmod)$ and $\Db(\Lambda\lh\ngmod)$; it is the gradings that reveal the hidden relation between $R$ and $\Lambda$. The name ``Koszul duality'' comes from the Koszul complex \eqref{eqn:koszul-resolution}, which plays a key role in this derived equivalence.

Koszul duality as a phenomenon in Lie theory goes back to Beilinson--Ginzburg--Soergel \cite{BGS}. Let $\fg$ be a complex semisimple Lie algebra. Choose a Borel and a Cartan subalgebra $\mathfrak{b} \supset \fh$, and consider the BGG category $\cO = \cO(\fg,\mathfrak{b},\fh)$. As explained in loc.~cit., one can define a ``graded version'' $\cO_0^\gr$ of its principal block $\cO_0$, which comes with a ``grading shift'' autoequivalence, a forgetful functor, and a natural isomorphism
\[
 \la1\ra: \cO_0^\gr \to \cO_0^\gr, \qquad \For: \cO_0^\gr \to \cO_0, \qquad \For \circ \la1\ra \cong \For,
\]
analogous to the forget-the-grading functor $A\lh\gmod \to A\lh\ngmod$. The Koszul self-duality of \cite{BGS} is a triangulated autoequivalence
\[
 \kappa^{\mathrm{BGS}}: \Db\cO_0^\gr \simto \Db\cO_0^\gr,
\]
intertwining the shifts as in \eqref{eqn:koszul-shifts-2}. Thus Koszul duality is a hidden self-duality of $\cO_0$ revealed by its graded version $\cO_0^\gr$.

In the setting of the previous paragraph, one obtains a realization $\fh^{\mathfrak{g}}$ over $\C$ of the Weyl group $W$ by setting $V = \fh$ with the usual notion of simple roots and coroots. A result of Achar--Riche \cite{AR:frobenius} identifies $\Kb\cH(\fh^{\mathfrak{g}},W)$ with $\Db\cO_0^\gr$, and Theorem~\ref{thm:koszul-regular-rep} recovers the Beilinson--Ginzburg--Soergel result.\footnote{More precisely, Theorem~\ref{thm:koszul-regular-rep} becomes $\kappa^{\mathrm{BGS}}$ composed with the Ringel self-duality of $\cO_0^\gr$.} By specializing to some other realizations arising naturally in Lie theory, Theorem~\ref{thm:koszul-regular-rep} and its variants yield further derived equivalences of appropriate graded versions of certain categories of representations.

We should comment that the classical Koszul duality deals with what are called Koszul graded rings. In the setting of this article, the algebras involved are in general not Koszul, and what remains are the homological patterns such as \eqref{eqn:koszul-shifts-2}. We continue to call these equivalences Koszul duality.

It seems to this author that Koszul duality in this general sense is the more basic phenomenon in Lie theory, whereas Koszulity lies deeper, being related to the question of exceptional primes (i.e.~when modular representation theory does not behave like characteristic $0$ representation theory). For instance, the Koszulity of $\cO_0^\gr$ is essentially equivalent to the Kazhdan--Lusztig conjecture.

% -------------------------------------------------------------------------------------
\subsection{Monodromy action and left-monodromic complexes}\label{ss:LM}
% -------------------------------------------------------------------------------------

In the generality stated in Theorem~\ref{thm:koszul-regular-rep}, Koszul duality is obtained as a consequence of the monoidal Koszul duality (Theorem~\ref{thm:monoidal-koszul-duality}) discussed in \S\ref{s:mkd} below. Nevertheless, we can already explain a key ingredient towards Theorem~\ref{thm:koszul-regular-rep}.

Let $f \in R$ be homogeneous of degree $d$. Since $\ocH(\fh,W)$ is a left quotient, right multiplication by $f$ defines a morphism $m(f)_B : B \to B(d)$ for any $B \in \ocH(\fh,W)$, and more generally
\[
 m(f)_\cF : \cF \to \cF(d)
\]
for any complex $\cF \in \Kb\ocH(\fh,W)$. These morphisms define a natural transformation $m(f) : \id \to (d)$ of endofunctors of $\Kb\ocH(\fh,W)$, and varying $f \in R$, we obtain a graded algebra map
\[
 m : R \to \bigoplus_{d \in \Z} \Hom(\id, (d)).
\]
Thus $R$ acts functorially on every object in $\Kb\ocH(\fh,W)$ in a graded way compatible with $(1)$. For the purpose of these paragraphs, we say that $R$ acts on the category with shift $(\Kb\ocH(\fh,W), (1))$.

Theorem~\ref{thm:koszul-regular-rep} implies an additional action. Since $R^\vee = \Sym(V)$ similarly acts by left multiplication on $(\Kb\ucH(\fh^*,W), (1))$, via $\kappa$ it should also act on $(\Kb\ocH(\fh,W), \la1\ra)$. That is, we expect a graded algebra map
\begin{equation} \label{eqn:monodromy}
 \mu : R^\vee \to \bigoplus_{d \in \Z} \Hom(\id, \la d \ra).
\end{equation}
Concretely, for $h \in R^\vee$ homogeneous of degree $d$, we expect morphisms
\[
 \mu(h)_\cF : \cF \to \cF\la d \ra
\]
functorial in $\cF \in \Kb\ocH(\fh,W)$.

One key construction in the paper \cite{AMRWa} is to find this hidden action \eqref{eqn:monodromy}, called the \emph{left monodromy action}. This is done by replacing $\Kb\ocH(\fh,W)$ with an equivalent triangulated category $\LM(\fh,W)$ of \emph{left-monodromic complexes}, where this action becomes more visible.

We begin by describing $\Kb\ocH(\fh,W)$ in a slightly different way. Given a complex $\cF \in \Kb\ocH(\fh,W)$, first consider the underlying \emph{$\cH(\fh,W)$-sequence} $\cF = (\cF^i)_{i \in \Z}$, i.e.~a $\Z$-graded sequence of objects $\cF^i \in \cH(\fh,W)$. Its graded endomorphism ring
\[
 \uEnd(\cF) := \prod_{p,q,d} \Hom_{\cH(\fh,W)}(\cF^p,\cF^q(d)),
\]
is an $R$-bimodule, bigraded by homological degree and the grading in $\cH(\fh,W)$. The counit $\epsilon_R : R \to \bk$ induces a map
\[
 \epsilon_R : \uEnd(\cF) \to \bk \otimes_R \uEnd(\cF).
\]
Then an object of $\Kb\ocH(\fh,W)$ may be viewed as a pair $(\cF,\delta)$, where $\cF$ is the underlying sequence as above, and $\delta$ is an element of $\uEnd(\cF)$ of an appropriate bidegree (that we will not specify) satisfying $\epsilon_R(\delta \circ \delta) = 0$.

Roughly speaking, $\Kb\ocH(\fh,W)$ is obtained from $\Kb\cH(\fh,W)$ by applying $\bk \otimes_R (-)$. The idea behind left-monodromic complexes is to replace $\bk$ by its Koszul resolution, a resolution as a graded $R$-module:
\[
\begin{tikzcd}
 \cdots \ar[r] & (V^* \wedge V^*) \otimes R \ar[r] & V^* \otimes R \ar[r] & R \ar[r, "\epsilon_R"] & \bk.
\end{tikzcd}
\]
Here and in the rest of this article, undecorated tensor products are over $\bk$. Let $\Lambda = \Lambda(V^*)$, the exterior algebra of $V^*$ (with an appropriate bigrading, placing $V^*$ in homological degree $-1$). Then the Koszul resolution can be written as
\begin{equation} \label{eqn:koszul-resolution}
 A := \Lambda \otimes R
\end{equation}
equipped with an appropriate differential $\kappa$. Now, we define $\LM(\fh,W)$ by applying $A \otimes_R (-)$ to $\Kb\cH(\fh,W)$, in the following sense. Given two $\cH(\fh,W)$-sequences $\cF$ and $\cG$, define
\[
 \uHom_\LM(\cF, \cG) := \Lambda \otimes \uHom(\cF, \cG) \quad (= A \otimes_R \uHom(\cF, \cG)),
\]
which has a differential $\kappa$ defined via its action on $A$. We also write $\uEnd_\LM(\cF)$ for $\uHom_\LM(\cF,\cF)$.

\begin{defn}[{\cite[Definition~4.4.2]{AMRWa}}] \label{defn:LM}
 An object of $\LM(\fh,W)$, called a \emph{left-monodromic complex}, is a pair $(\cF, \delta)$, where $\cF$ is a $\cH(\fh,W)$-sequence and $\delta \in \uEnd_\LM(\cF)$. We require that $\delta$ is of an appropriate bidegree, and that it satisfies
\begin{equation} \label{eqn:LM-defn}
 \delta \circ \delta + \kappa(\delta) = 0.
\end{equation}
The definition of morphisms in $\LM(\fh,W)$ parallels that for the homotopy category. Given two left-monodromic complexes $(\cF,\delta_\cF)$ and $(\cG,\delta_\cG)$, one makes $\uHom_\LM(\cF,\cG)$ into a complex under the differential
\begin{equation} \label{eqn:uHom-LM-diff}
 d_{\uHom_\LM}(f) = \delta_\cG \circ f - (-1)^{|f|} f \circ \delta_\cF + \kappa(f),
\end{equation}
where $|f|$ is the cohomological (first) degree of $f$. Then $\Hom_{\LM(\fh,W)}(\cF,\cG)$ is defined to be the bidegree $(0,0)$ homology of $\uHom_\LM(\cF,\cG)$. We adopt the usual terminology of dg categories: a \emph{chain map} is a degree $(0,0)$ element $f \in \uHom_\LM(\cF,\cG)$ satisfying $d_{\uHom_\LM}(f) = 0$, and a chain map is \emph{nullhomotopic} if it is of the form $d_{\uHom_\LM}(h)$ for some $h \in \uHom_\LM(\cF,\cG)$. Thus morphisms in $\LM(\fh,W)$ are chain maps modulo homotopy.
\end{defn}
With an appropriate triangulated structure on $\LM(\fh,W)$, the quasi-isomor-\\phism $\epsilon_A : A \simto \bk$ induces a triangulated equivalence
\[
 \For: \LM(\fh,W) \simto \Kb\ocH(\fh,W): (\cF, \delta) \mapsto (\cF, \epsilon_A(\delta)).
\]
Roughly, this functor takes left-monodromic ``differentials'' and chain maps and discards any component that involves a nontrivial $\Lambda$ part.

There is also a natural functor
\[
 \For: \Kb\cH(\fh,W) \to \LM(\fh,W): (\cF, \delta) \mapsto (\cF, \eta_\Lambda(\delta)),
\]
where $\eta_\Lambda: \uHom(\cF,\cG) \to \uHom_\LM(\cF,\cG)$ denotes the maps induced by the unit $\eta_\Lambda: \bk \to \Lambda$. In particular, we have the left-monodromic complex
\begin{equation} \label{eqn:Tid}
 T_\id := \For(B_\id),
\end{equation}
with underlying $\cH(\fh,W)$-sequence $(\ldots, 0, B_\id, 0, \ldots)$, where $B_\id$ is in position $0$, and $\delta_{T_\id} = 0$. However, $\LM(\fh,W)$ contains many objects that do not come from $\Kb\cH(\fh,W)$ in this way.

Let us illustrate these definitions with examples, using Soergel bimodules.
\begin{ex} \label{ex:LM-sl2}
Consider the $\SL_2$ realization $\slreal$ (see Example~\ref{ex:sbim-sl2}). Then
\[
 \For: \LM(\slreal,S_2) \simto \Kb\oSBim(\slreal,S_2)
\]
sends
\begin{equation} \label{eqn:LM-RE-ex1}
\begin{array}{c}\begin{tikzcd}
R(1) \\
B_s \ar[u, "\BsR"] \\
R(-1) \ar[u, "\RBs"] \ar[uu, bend left=60, "-\alpha_s \otimes \id"]
\end{tikzcd}\end{array}
\quad \longmapsto \quad
\begin{array}{c}\begin{tikzcd}
R(1) \\
B_s \ar[u, "\BsR"] \\
R(-1). \ar[u, "\RBs"]
\end{tikzcd}\end{array}
\end{equation}
On the right hand side of \eqref{eqn:LM-RE-ex1} is the complex $T_s$ from \eqref{eqn:Ts}. We saw that $\delta_{T_s} \circ \delta_{T_s}$ has one nonzero component $\alpha_s \cdot \id: R(-1) \leadsto R(1)$. Since this component is killed by $\epsilon_R$, it can be lifted to the $V^* \otimes R$ term of the Koszul resolution:

\[
\begin{tikzcd}[row sep=0]
\alpha_s \otimes R \ar[r] & R \ar[r, "\epsilon_R"] & \bk, \\
\alpha_s \otimes 1 \ar[r, mapsto] & \alpha_s \ar[r, mapsto] & 0.
\end{tikzcd}
\]
In the left-monodromic lift of $T_s$, depicted on the left hand side of \eqref{eqn:LM-RE-ex1}, a new component $-\alpha_s \otimes \id$ of ``chain degree'' $2$, where $\alpha_s$ now lies in the exterior algebra $\Lambda$, records (minus) this lift.
\end{ex}
\begin{ex} \label{ex:LM-sl3}
Consider the $\SL_3$ Cartan realization $\fh^{\SL_3}_\bk$ of $W = S_3$. Let $S = \{s, t\}$, so that $V^* = \bk\alpha_s \oplus \bk\alpha_t$. There are now $s$- and $t$-colored dot morphisms
\begin{align*}
 \BsR: B_s \to R(1), \quad \RBs: R(-1) \to B_s, \quad \BtR: B_t \to R(1), \quad \RBt: R(-1) \to B_t,
\end{align*}
and we set $\BsRBt := \RBt \circ \BsR$. Then
\[
 \For: \LM(\fh^{\SL_3}_\bk,S_3) \simto \Kb\oSBim(\fh^{\SL_3}_\bk,S_3)
\]
sends
\[
\cF :=
\begin{array}{c}\begin{tikzcd}
R(3) \\
B_t(2) \ar[u, "\BtR"] \\
B_s \ar[u] \ar[u, "\BsRBt"] \ar[uu, bend left=60, "-\alpha_t \otimes \BsR" pos=0.6] \\
R(-1) \ar[u, "\RBs"] \ar[uu, bend left=60, "-\alpha_s \otimes \RBt"] \ar[uuu, bend right=60, "(\alpha_s \wedge \alpha_t) \otimes \id"']
\end{tikzcd}\end{array}
\quad \longmapsto \quad
\begin{array}{c}\begin{tikzcd}
R(3) \\
B_t(2) \ar[u, "\BtR"] \\
B_s \ar[u, "\BsRBt"] \\
R(-1) \ar[u, "\RBs"]
\end{tikzcd}\end{array} = \For(\cF).
\]
As in Example~\ref{ex:LM-sl2}, the chain degree $2$ components of $\delta_\cF$ correspond to (a particular choice of) lifts of the components of $\delta_{\For(\cF)} \circ \delta_{\For(\cF)}$
\[
 R(-1) \leadsto B_t(2): \alpha_s \cdot \RBt, \qquad B_s \leadsto R(3): \alpha_t \cdot \BsR
\]
to the $V^* \otimes R$ term of the Koszul resolution. In addition, $\delta_\cF \circ \delta_\cF$ has a component\footnote{The first composition in this display receives an extra minus sign from the Koszul sign rule, once one treats the gradings more carefully.}
\begin{multline*}
 R(-1) \leadsto R(3): \BtR \circ (-\alpha_s \otimes \RBt) + (-\alpha_t \otimes \BsR) \circ \RBs \\
 = \alpha_s \otimes (\alpha_t \cdot \id) - \alpha_t \otimes (\alpha_s \cdot \id),
\end{multline*}
which needs to be lifted to the $(V^* \wedge V^*) \otimes R$ term:
\[
\begin{tikzcd}[row sep=0]
(\alpha_s \wedge \alpha_t) \otimes R \ar[r] & (\alpha_s \otimes R) \oplus (\alpha_t \otimes R) \ar[r] & R \ar[r, "\epsilon_R"] & \bk \\
(\alpha_s \wedge \alpha_t) \otimes 1 \ar[r, mapsto] & \alpha_t \otimes \alpha_s - \alpha_s \otimes \alpha_t \\
\end{tikzcd}
\]
The chain degree $3$ component $(\alpha_s \wedge \alpha_t) \otimes \id$ of $\delta_\cF$ corresponds to this now unique lift, which encodes the choice of lifts made in the chain degree $2$ components.
\end{ex}
\begin{ex}
Again for the $\SL_3$ Cartan realization, there is a morphism of left-monodromic complexes
\[
\begin{tikzcd}[column sep=100pt]
& R(3) \\
B_s \ar[r, "\BsRBt" description] \ar[ru, "-\alpha_t \otimes \BsR"] & B_t(2) \ar[u, "\BtR"] \\
R(-1) \ar[u, "-\RBs"] \ar[ruu, out=140, in=170, distance=80, "(\alpha_s \wedge \alpha_t) \otimes \id"] \ar[ru, "-\alpha_s \otimes \RBt"']
\end{tikzcd}
\]
from the complex in the left hand column to the one in the right hand column (both coming from complexes in $\Kb\SBim(\fh^{\SL_3}_\bk,S_3)$), given by components $\BsRBt + (-\alpha_t \otimes \BsR) + (-\alpha_s \otimes \RBt) + ((\alpha_s \wedge \alpha_t) \otimes \id)$. The three diagonal components again use the Koszul resolution to record the failure of the ``classical'' component $\BsRBt$ to be a genuine map of complexes in $\Kb\SBim(\fh^{\SL_3}_\bk,S_3)$. The left-monodromic complex $\cF$ of Example~\ref{ex:LM-sl3} is the cone of this morphism.
\end{ex}

The point of replacing $\Kb\ocH(\fh,W)$ with the equivalent category $\LM(\fh,W)$ is the following. Consider the derivation $(-) \lfrown (-): V \otimes \Lambda \to \Lambda$ induced by the natural pairing between $V$ and $V^*$:
\[
 x \lfrown (r_1 \wedge \cdots \wedge r_k) = \sum_{i = 1}^k (-1)^{i+1}(r_1 \wedge \cdots \wedge \widehat{r_i} \wedge \cdots \wedge r_k)r_i(x)
\]
for $x \in V$ and $r_1, \ldots, r_k \in V^*$. This induces a map
\[
 (-) \lfrown (-): V \otimes \uEnd_\LM(\cF) \to \uEnd_\LM(\cF).
\]
Now, given $(\cF, \delta) \in \LM(\fh,W)$ and $x \in V$, one can show that $x \lfrown \delta$ defines a morphism $\mu(x)_\cF: \cF \to \cF\la2\ra$ functorial in $\cF$. In particular, the morphisms $\mu(x)_\cF$ and $\mu(y)_ \cF$ commute for $x, y \in V$, and we obtain by composition the desired monodromy action \eqref{eqn:monodromy}.

\begin{ex} For the left-monodromic complex $T_s$ from Example~\ref{ex:LM-sl2}, the morphism $\mu(x)_{T_s}: T_s \to T_s\la2\ra$ is given by the chain map
\[
\begin{tikzcd}[column sep=100pt]
R(1)
& R(-1) \\
B_s \ar[u, "\BsR"]
& B_s(-2) \ar[u] \\
R(-1) \ar[u, "\RBs"] \ar[uu, bend left=60, "-\alpha_s \otimes \id"] \ar[uur, "-\alpha_s(x)"]
& R(-3). \ar[u] \ar[uu, bend left=60]
\end{tikzcd}
\]
(We have omitted the component labels on $T_s\la2\ra$.)

For the left-monodromic complex $\cF$ from Example~\ref{ex:LM-sl3}, the morphism \\$\mu(x)_\cF: \cF \to \cF\la2\ra$ is given by the chain map
\[
\begin{tikzcd}[column sep=100pt]
R(3) 
& R(1) \\
B_t(2) \ar[u, "\BtR"]
& B_t \ar[u] \\
B_s \ar[u] \ar[u, "\BsRBt"] \ar[uu, bend left=60, "-\alpha_t \otimes \BsR" pos=0.6] \ar[uur, "-\alpha_t(x)\BsR" pos=0.65]
& B_s(-2) \ar[u] \ar[u] \ar[uu, bend right=60] \\
R(-1) \ar[u, "\RBs"] \ar[uu, bend left=60, "-\alpha_s \otimes \RBt"] \ar[uuu, bend left=90, distance=90, "(\alpha_s \wedge \alpha_t) \otimes \id_R" description] \ar[uur, "-\alpha_s(x)\RBt"' pos=0.35] \ar[uuur, "(\ast)" description]
& R(-3), \ar[u] \ar[uu, bend right=60] \ar[uuu, bend right=80, distance=50] \\
\end{tikzcd}
\vspace{-20pt}
\]
where $(\ast) = \alpha_t \otimes \alpha_s(x) \cdot \id_R - \alpha_s \otimes \alpha_t(x) \cdot \id_R$, and we have omitted the component labels on $\cF\la2\ra$.

In either case, observe that each ``classical'' component of the monodromy morphism comes from pairing $x$ with a chain degree $2$ component of the left-monodromic differential.
\end{ex}

Since the extra components in a left-monodromic ``differential'' record the failure of complexes in $\Kb\ocH(\fh,W)$ to be a genuine complex in $\Kb\cH(\fh,W)$, one can heuristically think of the left monodromy action as detecting whether complexes in $\Kb\ocH(\fh,W)$ admit a lift to $\Kb\cH(\fh,W)$. It should be emphasized that this is merely a heuristic; while complexes that admit a lift to $\Kb\cH(\fh,W)$ certainly have trivial left monodromy, the reverse implication is false, as shown by the following example.
\begin{ex} \label{ex:trivial-monodromy-lift-counterexample}
For the $\SL_3$ Cartan realization, consider the left-monodromic complex
\[
\cF :=
\begin{array}{c}\begin{tikzcd}
R(2) \\
B_s B_t \ar[u, "\BsRtensorBtR"] \\
R(-2). \ar[u, "\RBstensorRBt"] \ar[uu, bend left=60, "-\alpha_s \otimes (\alpha_t \cdot \id)"]
\end{tikzcd}\end{array}
\]
(We have omitted the monoidal product in $\SBim(\fh^{\SL_3}_\bk,S_3)$ from the notation. For example, $\RBstensorRBt = \RBs \otimes_R \RBt: R(-1) \otimes_R R(-1) \to B_s \otimes_R B_t$.) It is easily seen that $\cF$ (and its image in $\Kb\oSBim(\fh^{\SL_3}_\bk,S_3)$) does not admit a lift to $\Kb\SBim(\fh^{\SL_3}_\bk,S_3)$. However, $\cF$ has trivial left monodromy. Indeed, the monodromy map $\mu(x)_\cF: \cF \to \cF\la2\ra$ is given by the chain map
\[
\begin{tikzcd}[column sep=100pt]
R(2)
& R \\
B_sB_t \ar[u, "\BsRtensorBtR"]
& B_sB_t(-2) \ar[u] \\
R(-2) \ar[u, "\RBstensorRBt"] \ar[uu, bend left=60, "-\alpha_s \otimes (\alpha_t \cdot \id)"] \ar[uur, "-\alpha_s(x)\alpha_t \cdot \id"]
& R(-4), \ar[u] \ar[uu, bend left=60]
\end{tikzcd}
\]
which is nullhomotopic with homotopy given by a single component $R(-2) \leadsto R: -\alpha_t \otimes (\alpha_s(x) \cdot \id)$.
\end{ex}

%%%%%%%%%%%%%%%%%%%%%%%%%%%%%%%%%%%%%%%%%%%%%%%%%%%%%%%%%%%%%%%%%%%%%%%%%%%%%%%%%%%%%%%
\section{Monoidal Koszul duality}\label{s:mkd}
%%%%%%%%%%%%%%%%%%%%%%%%%%%%%%%%%%%%%%%%%%%%%%%%%%%%%%%%%%%%%%%%%%%%%%%%%%%%%%%%%%%%%%%
In this section, we explain a monoidal upgrade of Theorem~\ref{thm:koszul-regular-rep}.

% -------------------------------------------------------------------------------------
\subsection{Motivation and statements}\label{ss:mkd-statement}
% -------------------------------------------------------------------------------------
Let $(\fh,W)$ be a realization. The Hecke category $\cH(\fh^*,W)$ acts on the right quotient $\Kb\ucH(\fh^*,W)$ on the left, and also has a natural ``forgetful'' functor to $\Kb\ucH(\fh^*,W)$: the quotient functor to $\ucH(\fh^*,W)$ composed with the full embedding into $\Kb\ucH(\fh^*,W)$.

We seek a monoidal category that plays the Koszul dual role, filling the top left corner of the following diagram:
\[
 \begin{tikzcd}[column sep=huge, row sep=huge]
  ? \ar[r, dashed, leftrightarrow, "\kappamon", "\sim"']
  \ar[d, dashed, shift left=1.5ex, "\For"] &
  (\cH(\fh^*,W), \star)
  \ar[d, shift left=1.5ex, "\For"] \\
  \Kb\ocH(\fh,W)
  \ar[r, leftrightarrow, "\kappa", "\sim"']
  \ar[loop, dashed, out=100, in=80, distance=50] &
  \Kb\ucH(\fh^*,W).
  \ar[loop, out=100, in=80, distance=50]
 \end{tikzcd}
\]
Here, $\star$ denotes the monoidal product on $\cH(\fh^*,W)$, and $\kappa$ is the equivalence of Theorem~\ref{thm:koszul-regular-rep}. More precisely, we seek a $\bk$-linear additive monoidal category defined in terms of the realization $(\fh,W)$, but which is canonically monoidally equivalent to the dual Hecke category $\cH(\fh^*,W)$. Moreover, this category should admit a natural forgetful functor to $\Kb\ocH(\fh,W)$, and act on the left of the same category, making $\kappa$ into an equivalence of module categories. Said another way, just as the left multiplication action on $\Kb\ucH(\fh^*,W)$ comes from the endomorphism ring of the monoidal identity $B_\id \in \cH(\fh^*,W)$, the desired category provides a monoidal upgrade of the left monodromy action on $\Kb\ocH(\fh,W)$.

The main result of \cite{AMRWa} is the construction of this monoidal category. We will describe its objects (but not the monoidal structure) in \S\ref{ss:FM}. For now, let us state the results. Given a realization $(\fh,W)$, one first defines a $\bk$-linear category
\[
 \FM(\fh,W)
\]
of \emph{free-monodromic complexes}, which should be viewed as the Koszul dual of\\ $\Kb\cH(\fh^*,W)$. Like a left-monodromic complex, a free-monodromic complex is a $\cH(\fh,W)$-sequence equipped with an enhanced ``differential.'' There are two grading shifts $(1)$ and $[1]$ on $\FM(\fh,W)$ and a natural forgetful functor to $\LM(\fh,W) \simto \Kb\ocH(\fh,W)$ intertwining both shifts. The desired monoidal Koszul dual of $\cH(\fh^*,W)$ is a certain full additive subcategory
\[
 (\TiltFM(\fh,W), \hatstar) \subset \FM(\fh,W)
\]
of \emph{free-monodromic tilting sheaves}, equipped with \emph{free-monodromic convolution} $\hatstar$ and stable under $\la1\ra = [1](-1)$. In particular, $\TiltFM(\fh,W)$ contains lifts $\Tmon_w$ of the indecomposable tilting complexes $T_w$ in $\LM(\fh,W) \simto \Kb\ocH(\fh,W)$.

In \cite{AMRWa}, the category $\FM(\fh,W)$ and the operation $\hatstar$ are defined for an arbitrary realization $(\fh,W)$ in terms of the Elias--Williamson diagrammatic Hecke category. However, the proof that $\hatstar$ is bifunctorial uses geometry, so that one needs to restrict to Cartan realizations to define the monoidal category $(\TiltFM(\fh,W), \hatstar)$. We can now state monoidal Koszul duality.
\begin{thm}[Monoidal Koszul duality for Kac--Moody groups \cite{AMRWb}] \label{thm:monoidal-koszul-duality}
 Let $(\fh,W)$ be a Cartan realization over a field $\bk$ of characteristic not equal to $2$. There is a monoidal equivalence
 \[
  \kappamon : (\cH(\fh^*,W), \star) \simto (\TiltFM(\fh,W), \hatstar)
 \]
 satisfying $\kappamon \circ (1) \cong \la1\ra \circ \kappamon$ and sending $\kappamon(B_w) \cong \Tmon_w$ for all $w \in W$. Moreover, the diagram
 \[
  \begin{tikzcd}[row sep=huge, column sep=huge]
   (\TiltFM(\fh,W), \hatstar) \ar[r, leftrightarrow, "\kappa^{\mathrm{mon}}", "\sim"']
   \ar[d, shift left=1.5ex, "\For"] &
    (\cH(\fh^*,W), \star)
   \ar[d, shift left=1.5ex, "\For"] \\
   \Kb\ocH(\fh,W)
   \ar[r, leftrightarrow, "\kappa", "\sim"']
   \ar[loop, out=100, in=80, distance=50] &
   \Kb\ucH(\fh^*,W)
   \ar[loop, out=100, in=80, distance=50]
  \end{tikzcd}
 \]
 commutes up to natural isomorphism, and makes the equivalence $\kappa$ of Theorem~\ref{thm:koszul-regular-rep} an equivalence of module categories compatible with $\kappamon$.
\end{thm}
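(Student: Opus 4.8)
The plan is to construct $\kappamon$ by hand on the Bott--Samelson generators of $\cH(\fh^*,W)$, and then to verify, in turn, that it is monoidal, essentially surjective, and fully faithful; the compatibility with $\kappa$ is extracted at the end. Recall that, in its diagrammatic incarnation, $\cH(\fh^*,W)$ is the additive Karoubi envelope (for $\oplus$, the shift $(1)$, and direct summands) of the full monoidal subcategory $\cH^{\mathrm{BS}}(\fh^*,W)$ whose objects are the Bott--Samelson objects $B_{\underline{w}} := B_{s_1} \star \cdots \star B_{s_k}$ and whose morphisms are generated over $R^\vee = \Sym(V)$ by the Elias--Williamson generating morphisms (dots, trivalent vertices, $2m_{st}$-valent vertices), subject to the relations of Soergel calculus. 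Dually, by its construction in \cite[\S5]{AMRWa}, $\TiltFM(\fh,W)$ is the analogous Karoubi envelope (for $\oplus$, $\la1\ra$, and summands) of the free-monodromic Bott--Samelson objects $\Tmon_{\underline{w}} := \Tmon_{s_1} \hatstar \cdots \hatstar \Tmon_{s_k}$, with $\hatstar$ between these being (up to canonical shift) concatenation of words. The first step is to specify free-monodromic counterparts of the Elias--Williamson data: $B_s \mapsto \Tmon_s$; the ``polynomial box'' $\End^\bullet(B_\id) = R^\vee$ goes to the free monodromy action on $\Tmon_{\id}$ (the free-monodromic lift of $T_\id$), i.e.\ the free-monodromic enhancement of the action $\mu$ of \eqref{eqn:monodromy}; and each dot, trivalent, and $2m_{st}$-valent vertex goes to the corresponding free-monodromic morphism constructed there. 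The first real task is to check that every relation of Soergel calculus — the polynomial and one-color relations, and for each pair $\{s,t\}$ the two-color relations — holds among these free-monodromic morphisms; this produces a monoidal functor $\cH^{\mathrm{BS}}(\fh^*,W) \to (\FM(\fh,W),\hatstar)$ with image in $\TiltFM(\fh,W)$, which passes to Karoubi envelopes to give $\kappamon : \cH(\fh^*,W) \to \TiltFM(\fh,W)$. Once the free-monodromic morphisms above are assigned their bidegrees compatibly, the relations $\kappamon \circ (1) \cong \la1\ra \circ \kappamon$ and $\kappamon(B_{\underline{w}}) \cong \Tmon_{\underline{w}}$ hold by construction.

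For essential surjectivity: $\TiltFM(\fh,W)$ is generated under $\hatstar$, $\la1\ra$, $\oplus$, and summands by the $\Tmon_s = \kappamon(B_s)$, so it is enough to show that each indecomposable $\Tmon_w$ occurs as a summand of some $\kappamon(B_{\underline{w}})$. This is governed at the level of split Grothendieck groups: the free-monodromic standard/costandard filtrations give $[\TiltFM(\fh,W)]_\oplus \cong \H(W)$ with $[\Tmon_w]$ the basis element $t_w$, matching $[\cH(\fh^*,W)]_\oplus \cong \H(W)$ under $\iota$; since $\kappamon$ induces this matching on classes, a Krull--Schmidt argument promotes ``surjective on classes'' to ``essentially surjective,'' and in particular yields $\kappamon(B_w) \cong \Tmon_w$ for all $w \in W$.

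The crux is full faithfulness. Since Bott--Samelson objects generate, one reduces to showing that
\[
 \kappamon : \Hom^\bullet_{\cH(\fh^*,W)}(B_{\underline{v}}, B_{\underline{w}}) \longrightarrow \Hom^\bullet_{\TiltFM(\fh,W)}(\Tmon_{\underline{v}}, \Tmon_{\underline{w}})
\]
is an isomorphism for all words $\underline{v}, \underline{w}$ (say, by induction on the lengths). The source is a free graded $R^\vee$-module with the double-leaves basis of Libedinsky and Elias--Williamson, indexed by pairs of subexpressions of $\underline{v}$ and $\underline{w}$; so it suffices to prove (i) the target is free over the relevant (completed) polynomial ring of the same graded rank, and (ii) $\kappamon$ carries a double-leaves basis to a basis (equivalently, to a spanning set of the correct size). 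For (i) the key input is the standard/costandard formalism inside $\FM(\fh,W)$: $\Tmon_{\underline{w}}$ is an iterated $\hatstar$-product of the $\Tmon_s$, each of which fits in a triangle assembled from $\widehat{\Delta}_\id$ and $\widehat{\Delta}_s$, so iterating gives $\Tmon_{\underline{w}}$ a $\widehat{\Delta}$-filtration with subquotients indexed by subexpressions of $\underline{w}$ (and dually a $\widehat{\nabla}$-filtration), exactly parallel to the $\Delta$- and $\nabla$-filtrations of the Soergel Bott--Samelson $B_{\underline{w}}$; the orthogonality $\uHom_{\FM}(\widehat{\Delta}_v, \widehat{\nabla}_w) = 0$ for $v \neq w$, together with freeness of rank one when $v = w$, then collapses the spectral sequence computing the Hom space and pins down its graded rank. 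For (ii) one checks locally that $\kappamon$ sends each light leaf to a free-monodromic light leaf, using the already-computed values of $\kappamon$ on dots and trivalent vertices together with the identification of the box $R^\vee$ with the free monodromy. I expect step (i) — the $\uHom$-orthogonality of free-monodromic standards and costandards, and the control of the two gradings and the dg differential needed to collapse the spectral sequence — to be the main obstacle; this is precisely the point at which geometric input (parity sheaves, the Decomposition Theorem) is used in \cite{AMRWa, AMRWb}.

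It remains to produce the commuting square and deduce the module-category statement. Both forgetful functors — $\For : \TiltFM(\fh,W) \to \Kb\ocH(\fh,W)$ (through $\LM(\fh,W) \simto \Kb\ocH(\fh,W)$) and $\For : \cH(\fh^*,W) \to \Kb\ucH(\fh^*,W)$ — are part of the construction, and $\kappa$ is the equivalence of Theorem~\ref{thm:koszul-regular-rep}. To obtain a natural isomorphism $\For \circ \kappamon \cong \kappa \circ \For$ it suffices to exhibit it on the generators $B_s$, where both composites compute the same ``$s$-wall-crossing'' endofunctor, and then to propagate it along $\star$-products compatibly with the associativity and unit constraints — a diagram chase once $\kappamon$ is known to be monoidal. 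Granting this square, the final assertion is formal: the data $(\kappamon, \For, \For, \kappa)$ exhibit $\kappa$ as an equivalence of module categories intertwining the left $\star$-action of $\cH(\fh^*,W)$ on $\Kb\ucH(\fh^*,W)$ with the left $\hatstar$-action of $\TiltFM(\fh,W)$ on $\Kb\ocH(\fh,W)$, transported along $\kappamon$.
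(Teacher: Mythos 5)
This survey does not actually prove Theorem~\ref{thm:monoidal-koszul-duality}; it cites \cite{AMRWb} and only indicates the strategy in the remark immediately following the statement, namely that $\kappamon$ is defined by generators and relations using the Elias--Williamson diagrammatic presentation of $\cH(\fh^*,W)$. Your proposal is a reasonable reconstruction of that strategy and, as far as the survey lets one tell, of the argument in \cite{AMRWa,AMRWb}: define $\kappamon$ on Bott--Samelson objects and on the generating morphisms (dots, trivalent vertices, $2m_{st}$-valent vertices, and the polynomial action identified with right monodromy on $\Tmon_\id$), verify the Soergel-calculus relations among their free-monodromic images, and then establish full faithfulness by matching graded ranks via double leaves and a free-monodromic standard/costandard filtration formalism.

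Two corrections are in order. First, you attribute the geometric input (parity sheaves, Decomposition Theorem) to the $\uHom$-orthogonality of free-monodromic (co)standards in the full-faithfulness step, whereas the survey explicitly locates it elsewhere: geometry is used to prove that free-monodromic convolution $\hatstar$ is \emph{bifunctorial} on $\TiltFM(\fh,W)$, which is precisely why the construction of the monoidal category $(\TiltFM(\fh,W),\hatstar)$ is restricted to Cartan realizations. Second, your description of $\TiltFM(\fh,W)$ as ``by its construction \dots the Karoubi envelope of the free-monodromic Bott--Samelson objects'' conflates definition with theorem: $\TiltFM(\fh,W)$ is \emph{defined} as the full additive subcategory of $\FM(\fh,W)$ of free-monodromic tilting objects, stable under $\la1\ra$; that it coincides with the Karoubi envelope of the $\Tmon_{\underline{w}}$ is a consequence of the classification of indecomposable free-monodromic tilting sheaves, and using it as a definition silently folds in part of the essential surjectivity you later argue for.
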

\begin{rmk}
 The functor $\kappamon$ is defined by generators and relations. That is, viewing $\cH(\fh^*,W)$ as the Elias--Williamson diagrammatic category, $\kappamon$ is defined by specifying the images of its generating objects and morphisms, then checking that they satisfy the defining relations of $\cH(\fh^*,W)$. (This is the categorical analogue of defining a homomorphism out of an algebra defined by generators and relations.) Thus, although Theorem~\ref{thm:monoidal-koszul-duality} may be stated in terms of parity complexes on Kac--Moody flag varieties, the Elias--Williamson monoidal presentation is crucial for the proof.
\end{rmk}

% -------------------------------------------------------------------------------------
\subsection{Free-monodromic complexes}\label{ss:FM}
% -------------------------------------------------------------------------------------

Since $\Kb\cH(\fh^*,W)$ has a multiplication action on both left and right, $\FM(\fh,W)$ should have a monodromy action on both left and right. Unlike the left monodromy action described in \S\ref{ss:LM}, the right monodromy action is forced on $\FM(\fh,W)$ in the following way.

Recall that $\Lambda = \Lambda(V^*)$ and $R^\vee = \Sym(V)$. Given a $\cH(\fh,W)$-sequence $\cF$ as in \S\ref{ss:LM}, now consider the further enhancement
\[
 \uEnd_\FM(\cF) := \Lambda \otimes \uEnd(\cF) \otimes R^\vee  \quad (= A \otimes_R \uEnd(\cF) \otimes R^\vee)
\]
of its endomorphism algebra. As with $\uEnd_\LM(\cF)$, $\kappa$ acts via its action on $A$. Consider the canonical element
\begin{equation} \label{eqn:Theta}
 \Theta = \sum (\id_\cF \ustar e_i) \otimes \check e_i \in \uEnd_\FM(\cF),
\end{equation}
where $\{e_i\}$ and $\{\check e_i\}$ are dual bases of $V^* \subset \Lambda$ and $V \subset R^\vee$, and $\ustar$ denotes the multiplication action on $\Kb\cH(\fh,W)$ (or on $\LM(\fh,W) \simto \Kb\ocH(\fh,W)$).
\begin{defn}[{\cite[Definition~5.1.1]{AMRWa}}]
An object of $\FM(\fh,W)$, called a \emph{free-monodromic complex}, is a pair $(\cF,\delta)$, where $\cF$ is a $\cH(\fh,W)$-sequence and $\delta \in \uEnd_\FM(\cF)$. We require that $\delta$ is of an appropriate bidegree, and that it satisfies
\begin{equation} \label{eqn:FM-defn}
 \delta \circ \delta + \kappa(\delta) = \Theta.
\end{equation}
Morphisms in $\FM(\fh,W)$ can also involve both $\Lambda$ and $R^\vee$, and are otherwise defined in much the same way as in $\LM(\fh,W)$ (see Definition~\ref{defn:LM}).
\end{defn}
The rather mysterious condition \eqref{eqn:FM-defn} will be partly explained in Lemma~\ref{lem:appendix}.

The counit $\epsilon_{R^\vee}: R^\vee \to \bk$ induces a map $\epsilon_{R^\vee}: \uEnd_\FM(\cF) \to \uEnd_\LM(\cF)$ that kills $\Theta$ and sends \eqref{eqn:FM-defn} to \eqref{eqn:LM-defn}, hence induces a forgetful functor
\[
 \For: \FM(\fh,W) \to \LM(\fh,W): (\cF, \delta) \mapsto (\cF, \epsilon_{R^\vee}(\delta)),
\]
which should be viewed as the Koszul dual of the natural quotient functor $\Kb\cH(\fh^*,W)$ $\to \Kb\ucH(\fh^*,W)$.

We exhibit two examples of free-monodromic complexes using Soergel bimodules; see \cite[\S5.3]{AMRWa} for more computations involving these examples. Let
\[
 \theta = \sum e_i \otimes \id \otimes \check e_i, \qquad \theta_s = \sum s(e_i) \otimes \id \otimes \check e_i,
\]
where $\{e_i\}$ and $\{\check e_i\}$ are dual bases of $V^*$ and $V$.

\begin{ex} \label{ex:Tmon-id}
The following picture depicts the \emph{free-monodromic unit} $\Tmon_\id$, a free-monodromic complex lifting the left-monodromic complex $T_\id$ from \eqref{eqn:Tid}:
\[
\Tmon_\id := \quad
\begin{tikzcd}
R. \ar[loop right, distance=30, "\theta"]
\end{tikzcd}
\]
In other words, its underlying sequence of Soergel bimodules is $R$ in position $0$, and $\delta_{\Tmon_\id}$ consists of a single nonzero component $R \leadsto R: \theta$. This object is the monoidal unit for free-monodromic convolution.
\end{ex}

\begin{ex} \label{ex:Tmon-s}
Consider again the $\SL_2$ realization $\slreal$ (see Example~\ref{ex:sbim-sl2}). The following picture depicts the \emph{free-monodromic tilting sheaf} $\Tmon_s$, which lifts the left-monodromic complex $T_s$ from Example~\ref{ex:LM-sl2}:
\[
\Tmon_s := \quad
\begin{tikzcd}
R(1) \ar[loop right, in=0, out=20, distance=40, "\theta_s" pos=0.6] \ar[d, bend left=80, "1 \otimes \RBs \otimes \alpha_s^\vee" pos=0.4] \\
B_s \ar[u, "\BsR"] \ar[loop right, in=-20, out=0, distance=50, "\theta_s"] \\
R(-1). \ar[u, "\RBs"] \ar[uu, bend left=60, "-\alpha_s \otimes \id \otimes 1"] \ar[loop right, in=-20, out=20, distance=35, "\theta"]
\end{tikzcd}
\vspace{-10pt}
\]
\end{ex}

We end with an easy lemma that partly explains the condition \eqref{eqn:FM-defn}. Since this lemma does not appear in \cite{AMRWa}, we state it with the precise bigradings, which were not explained in this article.
\begin{lem} \label{lem:appendix}
 Let $(\cF, \delta_\LM)$ be a left-monodromic complex. Choose dual bases $\{e_i\}$ and $\{\check e_i\}$ of $V^*$ and $V$. Choose elements $\delta_i \in \uEnd_\LM(\cF)^1_2$, and set
 \begin{equation} \label{eqn:lem-appendix}
  \delta := \delta_\LM \otimes 1 + \sum \delta_i \otimes \check e_i \in \uEnd_\FM(\cF)^1_0.
 \end{equation}
 Then $(\cF, \delta)$ is a free-monodromic complex if and only if
 \begin{align*}
  d_{\uHom_\LM}(\delta_i) = \id_\cF \ustar e_i \text{ for all } i, \\
  \delta_i \circ \delta_i = 0 \text{ for all } i, \qquad \delta_i \circ \delta_j + \delta_j \circ \delta_i &= 0 \text{ for all } i \neq j,
 \end{align*}
 where $d_{\uHom_\LM}$ is the differential defined in \eqref{eqn:uHom-LM-diff}.
\end{lem}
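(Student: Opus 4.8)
The plan is to substitute the special differential \eqref{eqn:lem-appendix} into the free-monodromic relation \eqref{eqn:FM-defn} and compare the two sides degree by degree with respect to the $R^\vee$-grading. Three structural observations reduce this to routine bookkeeping. First, $\uEnd_\FM(\cF) = \uEnd_\LM(\cF) \otimes R^\vee$ with $R^\vee = \Sym(V)$ graded-commutative and concentrated in even (indeed zero) homological degree, so composition in $\uEnd_\FM(\cF)$ is computed by composing in $\uEnd_\LM(\cF)$ and multiplying in $R^\vee$, with no extra Koszul sign contributed by the $R^\vee$-factor. Second, $\kappa$ acts only through the $A = \Lambda \otimes R$ part and leaves the $R^\vee$-tensorand untouched, so that $\kappa(\delta) = \kappa(\delta_\LM) \otimes 1 + \sum_i \kappa(\delta_i) \otimes \check e_i$. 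Third, $\Theta = \sum_i (\id_\cF \ustar e_i) \otimes \check e_i$ is homogeneous of $R^\vee$-degree $1$, i.e.\ lies in $\uEnd_\LM(\cF) \otimes V$. I would begin by recording the (here suppressed) bidegrees of $V^* \subset \Lambda$, of $V \subset R^\vee$, of $\kappa$, and of the operation $\ustar$, and checking that $\delta_\LM \otimes 1$ and each $\delta_i \otimes \check e_i$ lie in $\uEnd_\FM(\cF)^1_0$ — so that \eqref{eqn:lem-appendix} is a legitimate candidate and $\Theta$, $\delta \circ \delta$, $\kappa(\delta)$ all live in bidegree $(2,0)$.

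Granting this, expand
\[
 \delta \circ \delta = (\delta_\LM \circ \delta_\LM) \otimes 1 + \sum_i (\delta_\LM \circ \delta_i + \delta_i \circ \delta_\LM) \otimes \check e_i + \sum_{i,j} (\delta_i \circ \delta_j) \otimes \check e_i \check e_j,
\]
add $\kappa(\delta)$, set the result equal to $\Theta$, and compare the coefficients of the monomials $1$, $\check e_i$, and $\check e_i \check e_j$ ($i \le j$), which are linearly independent in $\Sym(V)$. The coefficient of $1$ yields $\delta_\LM \circ \delta_\LM + \kappa(\delta_\LM) = 0$, which is \eqref{eqn:LM-defn} and holds by hypothesis since $(\cF, \delta_\LM)$ is a left-monodromic complex, so it imposes nothing. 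The coefficient of $\check e_i$ yields $\delta_\LM \circ \delta_i + \delta_i \circ \delta_\LM + \kappa(\delta_i) = \id_\cF \ustar e_i$; matching this against the formula \eqref{eqn:uHom-LM-diff} for $d_{\uHom_\LM}$ on $\uHom_\LM(\cF, \cF)$ — in which the middle term carries the sign $-(-1)^{|f|}$, here $+1$ because $|\delta_i| = 1$ — this is exactly $d_{\uHom_\LM}(\delta_i) = \id_\cF \ustar e_i$. Finally, neither $\kappa(\delta)$ nor $\Theta$ has any component of $R^\vee$-degree $2$, so the coefficient of $\check e_i^2$ (a nonzero element of $\Sym(V)$) forces $\delta_i \circ \delta_i = 0$, and for $i \ne j$ the coefficient of $\check e_i \check e_j = \check e_j \check e_i$ collects both $\delta_i \circ \delta_j$ and $\delta_j \circ \delta_i$ and forces $\delta_i \circ \delta_j + \delta_j \circ \delta_i = 0$. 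Each of these is an equality of homogeneous pieces, so \eqref{eqn:FM-defn} holds for $\delta$ as in \eqref{eqn:lem-appendix} if and only if the three displayed families of equations hold; this is the assertion.

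I expect the sign and bidegree bookkeeping to be the only real obstacle. One must confirm: that composing $\delta_i \otimes \check e_i$ with $\delta_\LM \otimes 1$ (and $\delta_i \otimes \check e_i$ with $\delta_j \otimes \check e_j$) introduces no sign beyond those already packaged inside $\circ$ on $\uEnd_\LM(\cF)$, which rests on $\check e_i$ sitting in even homological degree; that the sign in front of $\delta_i \circ \delta_\LM$ in the expansion of $\delta \circ \delta$ really does match the $-(-1)^{|f|}$ appearing in \eqref{eqn:uHom-LM-diff}, which is precisely why $\delta_i$ is taken in cohomological degree $1$; and that the bidegrees genuinely close up, so that the three families of monomials above exhaust the comparison and $\id_\cF \ustar e_i$ is literally the $\check e_i$-coefficient of $\Theta$, regarded as an element of $\uEnd_\LM(\cF)$. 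None of this is conceptually hard, but it is exactly the bookkeeping that the ``appendix'' status of the lemma, together with the bigrading that \cite{AMRWa} leaves implicit, postpones.
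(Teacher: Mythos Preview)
Your proposal is correct and follows exactly the same approach as the paper: expand $\delta \circ \delta + \kappa(\delta)$ and compare it with $\Theta$ degree by degree in $R^\vee$, identifying the $R^\vee$-degree $0$, $1$, and $2$ components with the left-monodromic condition, the equation $d_{\uHom_\LM}(\delta_i) = \id_\cF \ustar e_i$, and the anticommutation relations respectively. The paper's proof is just the one-line display of this expansion, without the sign and bidegree commentary you (reasonably) include.
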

\begin{proof}
 This follows by comparing the following calculation with \eqref{eqn:FM-defn}:
 \begin{multline*}
  \delta \circ \delta + \kappa(\delta) = \underbrace{\delta_\LM \circ \delta_\LM + \kappa(\delta_\LM)}_{=0} \\
  + \sum_i \underbrace{(\delta_\LM \circ \delta_i + \delta_i \circ \delta_\LM + \kappa(\delta_i))}_{= d_{\uHom_\LM}(\delta_i)} \otimes \check e_i
  + \sum_{i,j} (\delta_i \circ \delta_j) \otimes \check e_i \check e_j. \qedhere
 \end{multline*}
\end{proof}
As in the discussion preceding Example~\ref{ex:trivial-monodromy-lift-counterexample}, one may heuristically think that complexes in $\Kb\ucH(\fh^*,W)$ that admit a lift to $\Kb\cH(\fh^*,W)$ are those with trivial right monodromy. Lemma~\ref{lem:appendix} says that the condition \eqref{eqn:FM-defn} encodes the Koszul dual statement: heuristically, complexes in $\LM(\fh,W)$ that admit a lift to $\FM(\fh,W)$ are those with trivial right multiplication. (As with monodromy, only the forward implication is actually true in general.) Indeed, if $(\cF, \delta)$ is a free-monodromic complex with $\delta$ of the special form in \eqref{eqn:lem-appendix}, then Lemma~\ref{lem:appendix} states that components of $\delta$ with nontrivial $R^\vee$ part encode nullhomotopies for right multiplication on $\For(\cF) \in \LM(\fh,W)$.

\begin{rmk}
 In the recent work of Gorsky--Hogancamp \cite{GH}, an analogous lemma describes the data of a ``strict $y$-ification'' on a bounded complex of $\GL_n$ Soergel bimodules in terms of certain anti-commuting nullhomotopies.
\end{rmk}

\begin{ex}
For the free-monodromic unit $\Tmon_\id$ from Example~\ref{ex:Tmon-id}, each right multiplication $\id_{T_{\id}} \ustar e_i: T_\id \to T_\id(2)$ in $\LM(\fh,W)$ is nullhomotopic with homotopy $h_i = e_i \otimes \id_{T_\id}$. Indeed,
\[
 d_{\uHom_\LM}(h_i) = \delta_{T_\id} \circ h_i + h_i \circ \delta_{T_\id} + \kappa(h_i) = 0 + 0 + e_i \ustar \id_{T_{\id}} = \id_{T_{\id}} \ustar e_i.
\]
\end{ex}
\begin{ex}
For the free-monodromic tilting sheaf $\Tmon_s$ from Example~\ref{ex:Tmon-s}, each right multiplication $\id_{T_s} \ustar e_i: T_s \to T_s(2)$ is nullhomotopic, with homotopy $h_i$ given by
\[
\begin{tikzcd}[column sep=100pt]
R(1) \ar[r, "\sum s(e_i) \otimes \id_R"] \ar[rd, "(\ast)" description]
& R(3) \\
B_s \ar[u, "\BsR"] \ar[r, "\sum s(e_i) \otimes \id_{B_s}"]
& B_s(2) \ar[u] \\
R(-1) \ar[u, "\RBs"] \ar[uu, bend left=60, "-\alpha_s \otimes \id"] \ar[r, "\sum e_i \otimes \id_R"]
& R(1), \ar[u] \ar[uu, bend right=60]
\end{tikzcd}
\]
where $(\ast) = \sum e_i(\alpha_s^\vee) \RBs$. (Note that $\RBs \otimes \alpha_s^\vee = \sum e_i(\alpha_s^\vee) \RBs \otimes \check e_i$.) For example, the component $B_s \leadsto B_s(2)$ of $d_{\uHom_\LM}(h_i)$ equals
\begin{multline*}
 (\sum e_i(\alpha_s^\vee) \RBs) \circ \BsR + \kappa(\sum s(e_i) \otimes \id_{B_s}) \\
 = \sum e_i(\alpha_s^\vee) \BsRBs + s(e_i) \star \id_{B_s} = \id_{B_s} \star e_i.
\end{multline*}
Here, $\BsRBs := \RBs \circ \BsR$, and the last equality uses the so-called polynomial forcing relation in the Elias--Williamson diagrammatic category, which can be checked directly for Soergel bimodules.
\end{ex}

\bibliographystyle{alpha}
\bibliography{refs.bib}

\begin{thebibliography}{AMRW19}

\bibitem[AMRW]{AMRWa}
Pramod~N. Achar, Shotaro Makisumi, Simon Riche, and Geordie Williamson.
\newblock Free-monodromic mixed tilting sheaves on flag varieties.
\newblock preprint ar{X}iv:1703.05843.

\bibitem[AMRW19]{AMRWb}
Pramod~N. Achar, Shotaro Makisumi, Simon Riche, and Geordie Williamson.
\newblock {K}oszul duality for {K}ac--{M}oody groups and characters of tilting
  modules.
\newblock {\em J. Amer. Math. Soc.}, 32:261--310, 2019.

\bibitem[AR]{AR:formal-koszul}
Pramod~N. Achar and Simon Riche.
\newblock Dualit\'e de {K}oszul formelle et th\'eorie des repr\'esentations des
  groupes alg\'ebriques r\'eductifs en caract\'eristique positive.
\newblock preprint ar{X}iv:1703.05843.

\bibitem[AR13]{AR:frobenius}
Pramod~N. Achar and Simon Riche.
\newblock Koszul duality and semisimplicity of {F}robenius.
\newblock {\em Ann. Inst. Fourier (Grenoble)}, 63(4):1511--1612, 2013.

\bibitem[AR16a]{AR:mixed-derived}
Pramod~N. Achar and Simon Riche.
\newblock Modular perverse sheaves on flag varieties, {II}: {K}oszul duality
  and formality.
\newblock {\em Duke Math. J.}, 165(1):161--215, 2016.

\bibitem[AR16b]{ARider}
Pramod~N. Achar and Laura Rider.
\newblock The affine {G}rassmannian and the {S}pringer resolution in positive
  characteristic.
\newblock {\em Compos. Math.}, 152(12):2627--2677, 2016.

\bibitem[AR18]{AR}
Pramod~N. Achar and Simon Riche.
\newblock Reductive groups, the loop {G}rassmannian, and the {S}pringer
  resolution.
\newblock {\em Invent. Math.}, 214(1):289--436, 2018.

\bibitem[ARV]{ARV}
Pramod~N. Achar, Simon Riche, and Cristian Vay.
\newblock Mixed perverse sheaves on flag varieties of {C}oxeter groups.
\newblock preprint ar{X}iv:1802.07651, to appear in Canad. J. Math.

\bibitem[BGG78]{BGG}
I.~N. Bern{\v s}te{\u\i}n, I.~M. Gel{\cprime}fand, and S.~I. Gel{\cprime}fand.
\newblock Algebraic vector bundles on {${\bf P}^{n}$} and problems of linear
  algebra.
\newblock {\em Funktsional. Anal. i Prilozhen.}, 12(3):66--67, 1978.

\bibitem[BGS96]{BGS}
Alexander Beilinson, Victor Ginzburg, and Wolfgang Soergel.
\newblock Koszul duality patterns in representation theory.
\newblock {\em J. Amer. Math. Soc.}, 9(2):473--527, 1996.

\bibitem[BY13]{BY}
Roman Bezrukavnikov and Zhiwei Yun.
\newblock On {K}oszul duality for {K}ac-{M}oody groups.
\newblock {\em Represent. Theory}, 17:1--98, 2013.

\bibitem[EK10]{EK}
Ben Elias and Mikhail Khovanov.
\newblock Diagrammatics for {S}oergel categories.
\newblock {\em Int. J. Math. Math. Sci.}, pages Art. ID 978635, 58, 2010.

\bibitem[Eli16]{Eli}
Ben Elias.
\newblock The two-color {S}oergel calculus.
\newblock {\em Compos. Math.}, 152(2):327--398, 2016.

\bibitem[EW14]{EW:hodge}
Ben Elias and Geordie Williamson.
\newblock The {H}odge theory of {S}oergel bimodules.
\newblock {\em Ann. of Math. (2)}, 180(3):1089--1136, 2014.

\bibitem[EW16]{EW-soergel-calculus}
Ben Elias and Geordie Williamson.
\newblock Soergel calculus.
\newblock {\em Represent. Theory}, 20:295--374, 2016.

\bibitem[Fie08a]{Fie2}
Peter Fiebig.
\newblock The combinatorics of {C}oxeter categories.
\newblock {\em Trans. Amer. Math. Soc.}, 360(8):4211--4233, 2008.

\bibitem[Fie08b]{Fie1}
Peter Fiebig.
\newblock Sheaves on moment graphs and a localization of {V}erma flags.
\newblock {\em Adv. Math.}, 217(2):683--712, 2008.

\bibitem[GH]{GH}
Eugene Gorsky and Matthew Hogancamp.
\newblock Hilbert schemes and $y$-ification of {K}hovanov-{R}ozansky homology.
\newblock preprint ar{X}iv:1712.03938.

\bibitem[JMW14]{JMW}
Daniel Juteau, Carl Mautner, and Geordie Williamson.
\newblock Parity sheaves.
\newblock {\em J. Amer. Math. Soc.}, 27(4):1169--1212, 2014.

\bibitem[KL79]{KL}
David Kazhdan and George Lusztig.
\newblock Representations of {C}oxeter groups and {H}ecke algebras.
\newblock {\em Invent. Math.}, 53(2):165--184, 1979.

\bibitem[KL80]{KL2}
David Kazhdan and George Lusztig.
\newblock Schubert varieties and {P}oincar\'e duality.
\newblock In {\em Geometry of the {L}aplace operator ({P}roc. {S}ympos. {P}ure
  {M}ath., {U}niv. {H}awaii, {H}onolulu, {H}awaii, 1979)}, Proc. Sympos. Pure
  Math., XXXVI, pages 185--203. Amer. Math. Soc., Providence, R.I., 1980.

\bibitem[Maka]{Maka}
Shotaro Makisumi.
\newblock Mixed modular perverse sheaves on moment graphs.
\newblock preprint arXiv:1703.01571.

\bibitem[Makb]{Makb}
Shotaro Makisumi.
\newblock Modular {K}oszul duality for {S}oergel bimodules.
\newblock preprint arXiv:1703.01576.

\bibitem[MR18]{MR}
Carl Mautner and Simon Riche.
\newblock Exotic tilting sheaves, parity sheaves on affine {G}rassmannians, and
  the {M}irkovi\'c--{V}ilonen conjecture.
\newblock {\em J. Eur. Math. Soc. (JEMS)}, 20(9):2259--2332, 2018.

\bibitem[Ric16]{Ric:habilitation}
Simon Riche.
\newblock {G}eometric {R}epresentation {T}heory in positive characteristic.
\newblock 2016.
\newblock M\'{e}moire d'habilitation, Universit\'{e} Blaise Pascal (Clermont
  Ferrand 2), available from
  \url{https://tel.archives-ouvertes.fr/tel-01431526}.

\bibitem[RW18]{RW}
Simon Riche and Geordie Williamson.
\newblock Tilting modules and the {$p$}-canonical basis.
\newblock {\em Ast\'erisque}, (397):ix+184, 2018.

\bibitem[Soe97]{Soe}
Wolfgang Soergel.
\newblock Kazhdan-{L}usztig polynomials and a combinatoric[s] for tilting
  modules.
\newblock {\em Represent. Theory}, 1:83--114, 1997.

\bibitem[Soe07]{Soe:bimod}
Wolfgang Soergel.
\newblock Kazhdan-{L}usztig-{P}olynome und unzerlegbare {B}imoduln \"uber
  {P}olynomringen.
\newblock {\em J. Inst. Math. Jussieu}, 6(3):501--525, 2007.

\bibitem[Wil]{Wil:ICM}
Geordie Williamson.
\newblock Parity sheaves and the {H}ecke category.
\newblock Proceedings of the International Congress of Mathematicians 2018 (ICM
  2018).

\bibitem[Wil17]{Wil:Takagi}
Geordie Williamson.
\newblock Algebraic representations and constructible sheaves.
\newblock {\em Jpn. J. Math.}, 12(2):211--259, 2017.

\end{thebibliography}

\end{document}